\numberwithin{equation}{section}
\newtheorem{thm}{Theorem}[section]
\newtheorem{prop}[thm]{Proposition}
\newtheorem{defn}[thm]{Definition}
\newtheorem{assum}[thm]{Assumption}
\definecolor{blue-violet}{rgb}{0.54, 0.17, 0.89}
\definecolor{purple1}{rgb}{0.63, 0.36, 0.94}
\definecolor{purple2}{rgb}{0.87, 0.0, 1.0}
\definecolor{purple}{rgb}{0.5, 0.0, 0.5}
\definecolor{pansypurple}{rgb}{0.47, 0.09, 0.29}
\definecolor{orange}{rgb}{1.0, 0.27, 0.0}
\def\ba{\begin{array}}
\def\ea{\end{array}}
\def\beq{\begin{equation}}
\def\endeq{\end{equation}}
\def\bes{\begin{equation*}}
\def\ees{\end{equation*}}
\def\bea{\begin{eqnarray}}
\def\eea{\end{eqnarray}}
\def\beaa{\begin{eqnarray*}}
\def\eeaa{\end{eqnarray*}}
\def\dis{\displaystyle}
\def\no{\noindent}
\def\lastline{\par \vspace{-7.3ex} \no}
\def\nts{\negthinspace}
\def\q{\quad}
\def\qq{\qquad}
\def\={=\nts \nts=\nts \nts=\nts \nts=}
\def\bh{\mbox}
\def\({\textnormal{(}}
\def\){\textnormal{)}}
\def\qx{\wedge}
\def\d{\delta}
\def\e{\varepsilon}
\def\k{\kappa}
\def\l{\lambda}
\def\t{\tau}
\def\th{\theta}
\def\vf{\varphi}
\def\ksfl{\begin{cases}}
\def\jsfl{\end{cases}}
\def\O{\Omega}
\def\P{\Psi}
\def\kspl{\begin{itemize}}
\def\jspl{\end{itemize}}
\def\bbh{\nonumber}
\def\bbh{\nonumber}
\def\no{\noindent}
\def\q{\quad}
\def\qq{\qquad}
\def\jf{\int}
\def\fs{\frac}
\def\yjt{\rightarrow}
\def\kspl{\begin{enumerate}[(1)]}
\def\jspl{\end{enumerate}}
\def\zb{\left}
\def\yb{\right}
\def\hx{\mathscr}
\def\qed{\hfill \rule[0cm]{.25cm}{.25cm}\medskip}   
\def\b1{{\bf 1}}
\newenvironment{proof}{\no {\bf Proof.\;}}{$\qed$\vspace{.1in}}
\newenvironment{itm}{\vspace{-1ex}\begin{itemize}}{\end{itemize}}
\def\bi{\begin{itm}}
\def\ei{\end{itm}}
\def\equ_ind{\arabic{section}.\arabic{equation}}
\def\sec_ind{\arabic{section}}
\begin{document}
\begin{sloppypar}
\begin{CJK*}{GBK}{song}
\title{\Large \bf Minimizing the Ruin Probability under the Sparre Andersen Model
\thanks{This research is supported by Chinese NSF grants No. 11911530091, No. 11471171 and No. 11931018.}}

\author{{Linlin Tian}$^a$\footnote{E-mail:linlin.tian@mail.nankai.edu.cn}~~~
{Lihua Bai}$^a$\footnote{Corresponding author, E-mail:lhbai@nankai.edu.cn }~~~ {
}
\\
 \small  a. School of Mathematical Sciences, Nankai University, Tianjin 300071, China.}\,
\date{}
\maketitle
\begin{center}
\begin{minipage}{130mm}
{\bf Abstract.}
In this paper, we consider the problem of minimizing the ruin probability of an insurance company in which the surplus process follows the Sparre Andersen model.
Similar to Bai et al. \cite{bai2017optimal}, we recast this problem in a Markovian framework by adding another dimension representing the time elapsed since the last claim. After Markovization,  We investigate the regularity properties
of the value function, and state the dynamic programming principle. Furthermore,
we show that the value function is the unique  constrained viscosity
solution to the associated Hamilton-Jacobi-Bellman  equation. It should be noted that there is no discount factor in our paper, which makes it tricky to prove the uniqueness. To overcome this difficulty, we construct the strict viscosity supersolution. Then instead of comparing the usual viscosity supersolution and subsolution, we compare the supersolution and the strict subsolution. Eventually we show that all viscosity subsolution is less than the supersolution.

\vspace{3mm} {\bf Keywords:}  The Sparre Andersen model,  Minimizing the ruin probability, Optimal reinsurance policy, Viscosity solution.

\vspace{3mm} {\bf 2010 Mathematics Subject Classification}: 	49L25, 93E20, 	91B30
\end{minipage}
\end{center}

\section{Introduction}

We consider a minimizing ruin probability for an insurance company. This optimization problem was first suggested and studied by Cram$\acute{\mbox{e}}$r\cite{cramer1930mathematical}.
In the past twenty years or so, researchers have used the minimizing  ruin probability as a criterion for dynamically determining the optimal investment and reinsurance policy; see, for example, Azcue and Muler \cite{azcue2013minimizing} study the minimizing ruin probability problem assuming the management can invest dynamically part of the reserve in the non-cash asset. Liang and Young \cite{Liang2018Minimizing} considered the optimal investment and reinsurance strategy for an insurance company when the risk process follows a compound Poisson process. For more introduction of minimizing ruin probability, see  Gajek and Zagrodny \cite{Gajek2004Reinsurance},
Hipp and Plum \cite{HippOptimal},
Hipp and Taksar  \cite{hipp2010optimal},
 Meng and Zhang  \cite{meng2010optimal},
 Li and  Young \cite{danping}.

 For an insurance company, buying reinsurance to lower the claim risk is a natural choice. In this case,
an insurance company can minimize the ruin probability by finding the optimal reinsurance policy. In our model,
we study the finite-time minimizing the ruin probability of a compound renewal model, which has several distinct features in contrast to the existing literature since the wealth process is non-Markovian. We aim to maximize the survival probability by controlling the reinsurance retention level.
For the Markov process, one can explore the optimization problem by the stochastic optimal control theory. In the application of stochastic optimal control theory, one can associate a Hamilton-Jacobi-Bellman (HJB) equation to the stochastic optimal control problem by the dynamic programming principle (DPP) approach. But in our paper, the reserve process follows the Sparre Andersen model, which is no longer Markovian. Similar to  Bai et al.\cite{bai2017optimal}, we plan to ``Markovize" the model first, i.e., we apply the so-called $Backward$ $Markovization$ $technique$ (cf., e.g., \cite{rolski2009stochastic}). After Markovization,  we can study this optimization problem via the DPP approach. Specifically, we shall first investigate the regularity properties of the value function and then state the DPP, from which we can formally derive the associated HJB equation to which the value function is a solution in some sense.

The HJB equation associated to our problem is an equation involving a first-order integro-differential operator.
Since it is hard to conjecture the existence of continuously differentiable solutions for our HJB equation, it is natural to invoke the notion of viscosity solution as done by Azcue and Muler \cite{azcue2013minimizing}.
We recall that the notion of viscosity solutions was introduced by Crandall and Lions \cite{crandall1983viscosity} for the first-order equations and Lions \cite{lions1982optimal,lions1983optimal}
for the second-order equations.
It merely requires the continuity of the value function to define the viscosity solution. We refer to the
user's guide of Crandall, Ishii and Lions \cite{crandall1992user} and the lecture notes in Bardi et al. \cite{bardi2006viscosity}
for an overview of the viscosity solutions theory and its applications.

For our problem,  we can not establish all the explicit boundary condition for the value function based on the information of the optimization problem. The lack of boundary conditions of the HJB equation makes it impossible to prove the uniqueness of the solution. To overcome this difficulty,  we need to invoke the notion of  $constrained$ $viscosity$ $solution$ (see, e.g., Soner \cite{SonerOptimal} and Bai et al. \cite{bai2017optimal}), and as it turns out we can  show that the value function is indeed a constrained viscosity solution of the HJB equation on an appropriately defined domain. To the end, we show that the value function is the unique solution of the associated HJB equation.

When we are proving the uniqueness of the viscosity solutions, the main difficulty is that there is no discount factor in this model, or in other words, the coefficient of the function $V$  is $0$ in the HJB equation. As we can see,  other optimization papers' uniqueness proofs rely on the discount factor being positive,  see, e.g.,\cite{azcue2005,bai2017optimal,benth}. In our paper, we overcome the difficulty of lacking discount factors by  constructing the strict viscosity supersolution.
For a given supersolution, we construct a strict supersolution,
then, instead of comparing the usual supersolution and subsolution, we compare the size of subsolution and strict supersolution. Eventually, we can show the comparison holds among all viscosity subsolution and supersolution.

The rest of the paper is organized as follows. In section 2, we establish the basic setting and assumptions. In section 3, we study the properties of the value function and prove the continuity of the value function in the temporal variable. In section 4, we state the DPP and show that the value function is a constrained viscosity solution to the associated  HJB equation. Finally, in Section 5, we prove the comparison principle, hence prove that
the value function is the unique constrained viscosity solution of the corresponding HJB equation.

\section{Model and Assumption}

Throughout this paper, we work with a complete filtered probability space $(\O,\hx{F},\{\hx{F}_t\},\mathbb{P})$ on which is defined a renewal counting process $N=\{N_t\}_{t\ge 0}$. For this counting process $N_t$,
we denote $\{\sigma_n\}_{n=1}^\infty$ be the jump times ($\sigma_0:=0$) and $T_i=\sigma_i-\sigma_{i-1}$, $i=1,2,\ldots$ to be the time elapses between successive jumps. We assume that $T_i$'$s$ are independent and identically distributed with a common distribution $F:\mathbb{R}_+\mapsto \mathbb{R}_+$ and there exists an intensity function $\lambda:[0,+\infty)\mapsto[0,+\infty)$ such that $\bar{F}(t)=\mathbb{P}(T_1>t)=\exp\{-\int_0^t\lambda(u)du\}$.

Let $T>0$ be a given time horizon. Let  $N_t$ be a renewal counting process we mentioned before representing the frequency of the incoming claims and $\{U_i\}_{i=1}^{\infty}$ a sequence of random variables representing
the ``size" of the incoming claims. We assume that $\{U_i\}$ are i.i.d. with a
common distribution $G:\mathbb{R}_+\mapsto \mathbb{R}_+$, independent of $N$. Denote $Q_t:=\sum_{i=1}^{N_t}U_i$ for simplicity. Since $Q$ is non-Markovian in general (unless the counting process $N$ is a Poisson process), we cannot apply the dynamic programming principle directly. Therefore, we apply the  Backward Markovization technique to ``Markovize" $Q_t$ first. In other words, we  define a new process $W_t:=t-\sigma_{N_t}, t\ge 0$, representing the time elapsed since the last claim. It is known that $(t, Q_t, W_t)$ is a piecewise deterministic Markov process, see e.g., \cite{rolski2009stochastic}). We note that $0\le W_t\le t\le T$, for $t\in[0,T].$
Throughout this paper, we consider the filtration $\{\mathscr{F}\}_{t\ge 0}$, in which $\mathscr{F}_t:=\mathscr{F}_t^Q \vee\mathscr{F}_t^W$, $t\ge 0.$ Here $\{\mathscr{F}_t^{\xi}:t\ge 0\}$ denotes the natural filtration generated by the process $\xi=Q, W$, respectively, with the usual $\mathbb{P}$-augmentation such that it satisfies the $usual$ $hypotheses$ (cf., e.g., Propter \cite{propter}).

After Markovization, we can apply the dynamic optimal control theory, which means we can start at any time $t\in [0,T]$. In other words, instead of starting the clock at $0$, we start from $s\in[0,T]$ such that $W_s=w,\mathbb{P}$-a.s. Under the regular conditional probability distribution $\mathbb{P}_{sw}(\cdot):=\mathbb{P}(\cdot|W_s=w)$ on $(\Omega,\mathscr{F})$, we consider the ``shifted" version of processes $(Q,W)$ on the space $(\Omega, \mathscr{F}, \mathbb{P}_{sw}; \{\mathscr{F}_{t}\}_{t\ge s})$. We define a new counting process $N_t^s:=N_t-N_s$ starting at time $s\in[0,T]$, where $ t\in[s,T]$. Then $N^s$ is a ``delayed" renewal process. At the same time, its waiting times $T_i^s, i\ge 2$, remain independent, identically distributed as the
original $T_i's$. Denote $T_1^{s,w}:=T_{N_s+1}-w=\sigma_{N_s+1}-s$ the ``time-to-first-jump" and $T_1^{s,w}$ follows the following probability
\begin{equation}\nonumber
\mathbb{P}_{sw}(T_1^{s,w}>t)=\mathbb{P}(T_1>t+w|T_1>w)=e^{-\int_w^{w+t}\lambda(u)du}.
\end{equation}
In the following, we denote $N_t^{s,w}:=N_t^{s,w}|_{W_s=w}, Q_t^{s,w}:=\sum_{i=1}^{N_t^{s,w}}U_i$ and $W_t^{s,w}:=w+W_t-W_s, t\ge s$ for simplicity. It is seen that $(Q_t^{s,w}, W_t^{s,w}), t\ge s$ is a Markov $\mathscr{F}_t$-adapted process defined on $(\Omega, \mathscr{F}, \mathbb{P}_{sw}).$

Now we introduce the insurance model.
In this paper, we assume that the dynamics of surplus of an insurance company, denoted by $X=\{X_t\}_{t\ge 0}$, in the absence of reinsurance, follows the Sparre Andersen model:
\begin{equation}
{X}_t:=x+pt-Q_t=x+pt-\sum_{i=1}^{N_t}U_i,\quad\quad t\in[0,T],
\end{equation}
where $x=X_0\ge 0$, $p>0$ is a premium rate, $N_t$ is the  pre-mentioned renewal counting process  representing the frequency
of the incoming claims.
We define the control process by $\pi_t$, $t\ge 0$, where $\pi\in[0,1]$ representing the risk exposure, which means, for a fixed $\pi$, $100\pi\%$ of each claim is paid by the insurance company while $100(1-\pi)\%$ is paid by the reinsurer. Then, $p(1+\eta)(1-\pi)$ is the rate at which the premiums are diverted to the reinsurer by the insurance company, where $\eta>0$ is the safety loading of the reinsurance company. Notice that the reinsurance is called cheap when $\eta=0$ and non-cheap when $\eta>0.$

Throughout this paper, we will consider the the filtration $\mathscr{F}=\mathscr{F}^{(Q,W)}$ and we say that a control strategy  $\pi=\{\pi_t\}_{t\ge 0}$ is admissible if it is $\mathscr{F}$-predictable with c$\grave{\mbox{a}}$dl$\grave{\mbox{a}}$g paths, and square-integrable (i.e., $\mathbb{E}[\int_0^T|\pi_t|^2dt]<+\infty$). We denote the set of all admissible strategies restricted to $[s,T]\subseteq[0,T]$ by $U_{ad}^{s,x,w}[s,T]$. Notice that we labeled the $w$ on the upper right corner to emphasize the dependence on $w$.
For
$\pi\in U_{ad}^{s,x,w}[s,T]$ and initial surplus $x$, the dynamics of the controlled risk process $X_t^{\pi,s,x,w}$ satisfies the following stochastic differential equation (SDE):
\bea\label{model}
dX_t^{\pi,s,x,w}=[\pi_t(1+\eta)-\eta]pdt-\pi_td\sum_{i=N_s+1}^{N_t^{s,w}}U_i.
\eea
The process of the time elapsed since the last claim $W_t$ follows
\begin{equation}
W_t=w+(t-s)-(\sigma_{N_t}-\sigma_{N_s}).
\end{equation}
For any $\pi\in U_{ad}^{s,x,w}[s,T]$ , we denote $\t^\pi=\tau^{\pi,s,x,w}:=\inf\{t>s:X_t^{\pi,s,x,w}<0\}$ to be the ruin time of the insurance company. We shall make use of the following standing assumptions.
\begin{assum}
\label{ass1}
The insurance
premium $p$ is a positive constant.
The distribution function $G$ (of $U_i's$) is continuous on
$[0,+\infty)$. The distribution function $F$ (of $T_i's$)  is absolutely continuous, with density function $f$ and intensity
function $\lambda(t):={f(t)}\slash{\bar{F}(t)}>0,t\in[0,T]$.
\end{assum}
We now describe our optimization problem. Given an admissible strategy $\{\pi_t\}_{t\in[s,T]}\in U_{ad}^{s,x,w}[s,T]$,
 the corresponding survival probability of strategy $\pi$ is denoted by $J^\pi(s,x,w;\pi)$, in other words,
\begin{align}\nonumber
 J(s,x,w;\pi)&=\mathbb{P}(\tau^{\pi,s,x,w}\ge T|X_s^\pi=x,W_s=w)\\&=\mathbb{P}(X_t^{\pi,s,x,w}\ge 0,\; \mbox{for all}\; t\in[s,T]).
\end{align}
We aim to maximize the survival probability. Now we define the value function as the supremum of the survival probability, which means
\bea\label{opti1}
V(s,x,w):=\sup_{\pi\in U_{ad}^{s,x,w}[s,T]} J(s,x,w;\pi).
\eea
Noticing for all $x>\eta p(T-s)$, the survival probability on $[s,T]$ is $1$. Thus, the survival probability and the value function that we are about to study should be defined on  \[D:=\{(s,x,w):0\le s\le T,0\le  x\le\eta p(T-s) , 0\le w\le s\}.\]

We shall frequently carry out our discussion on the following two sets:
\begin{align}\nonumber
&\mathscr{D}:=int D=\{(s,x,w)\in D: 0<s<T, 0<x<\eta p(T-s),0<w<s \},\\\nonumber
&\mathscr{D}^*:=\{(s,x,w)\in D: 0\le s<T, 0\le x<\eta p(T-s),0\le w \le s\}.
\end{align}

\section{Basic Properties of the Value Function}
\begin{prop}\label{conx}
Assume the Assumption \ref{ass1} is in force, then for all $(s,x,w), (s+h,x,w)\in D,h>0$, \\
(1) the value function $V$ satisfies $V(s,x,w)\le V(s+h,x,w).$\\
(2) the value function $V$ is continuous with respect to $s$, uniformly for $s,x,w$.
\end{prop}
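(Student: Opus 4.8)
The plan is to exploit the structure of the controlled dynamics \eqref{model}: both the drift of $X^{\pi,s,x,w}$ and the jump coefficient $-\pi_t$ are bounded (since $\pi_t\in[0,1]$ and the drift term lies in $[-\eta p,p]$), and crucially the time horizon is finite. For part (1) the monotonicity in $s$ should follow from a pathwise coupling argument. Fix $(s,x,w)$ and $(s+h,x,w)$ in $D$. Given any admissible $\pi\in U_{ad}^{s+h,x,w}[s+h,T]$, I would construct an admissible strategy on $[s,T]$ that on $[s,s+h]$ buys full reinsurance ($\pi\equiv 0$ effectively, or more precisely chooses the control so that the surplus does not decrease), so that at time $s+h$ the surplus is at least $x$ and $W_{s+h}\ge w$; then it mimics $\pi$ thereafter. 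Because the distribution of the time-to-first-jump $T_1^{s,w}$ is that of $T_1$ conditioned on $\{T_1>w\}$, and this conditioning is monotone in $w$, starting at $(s,x,w)$ and waiting a length-$h$ prefix is at least as safe as starting at $(s+h,x,w)$. This gives $J(s,x,w;\tilde\pi)\ge J(s+h,x,w;\pi)$ and, taking suprema, $V(s,x,w)\ge$ ... wait, we want the reverse inequality $V(s,x,w)\le V(s+h,x,w)$: the cleaner direction is to start from $\pi\in U_{ad}^{s,x,w}[s,T]$, note that the remaining-time interval $[s,T]$ is \emph{longer} than $[s+h,T]$, so surviving on $[s,T]$ is harder; any strategy good for $(s,x,w)$ restricts to one at least as good for $(s+h,x,w)$. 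I would make this precise by a time-shift of the driving renewal data and the elementary inequality $\mathbb{P}(X^{\pi}_t\ge0,\ \forall t\in[s,T])\le\mathbb{P}(X^{\pi}_t\ge0,\ \forall t\in[s+h,T])$ for the shifted process, using that the conditional law of the post-$(s+h)$ dynamics given $\mathscr{F}_{s+h}$ dominates appropriately.

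For part (2), the uniform continuity in $s$, the key estimate is a bound on how much the surplus can move over a short time interval. The plan is: given $(s,x,w)$ and $(s+h,x,w)$, take a near-optimal $\pi$ for the later problem and extend it backward over $[s,s+h]$; the difference in survival probabilities is controlled by (a) the probability that a claim arrives in $[s,s+h]$, which is $\mathbb{P}_{sw}(T_1^{s,w}\le h)=1-e^{-\int_w^{w+h}\lambda(u)\,du}$, and (b) the deterministic drift displacement, which is at most $ph$ in absolute value. On the event that no claim occurs in $[s,s+h]$ and we choose $\pi_t$ on that interval to keep the surplus from dropping (e.g. $\pi_t\equiv 1$ if $\eta\ge 0$ makes the drift nonnegative — here one must check the sign: the drift is $[\pi_t(1+\eta)-\eta]p$, which is $\ge 0$ iff $\pi_t\ge \eta/(1+\eta)$, so pick $\pi_t=1$), the two trajectories can be matched after a shift of size $O(h)$ in the $x$-variable, and then one invokes whatever modulus of continuity in $x$ is available — or, if continuity in $x$ has not yet been established, one argues directly that entering the later problem with surplus $x-ph$ instead of $x$ costs at most the probability of ruin in the thin strip, which is itself $O(h)$ by the claim-arrival bound plus the fact that $x\le \eta p(T-s)$ keeps everything in a compact region. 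Summing, $|V(s,x,w)-V(s+h,x,w)|\le C\big(1-e^{-\int_w^{w+h}\lambda(u)\,du}\big)+(\text{modulus in }x)(ph)$, and since $\lambda$ is locally bounded on $[0,T]$ (Assumption \ref{ass1} gives $\lambda$ finite and continuous enough), the right-hand side tends to $0$ as $h\to 0$ uniformly in $(s,x,w)\in D$.

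I expect the main obstacle to be the interplay between continuity in $s$ and continuity in $x$: the natural coupling converts a time-shift of size $h$ into a space-shift of size $\le ph$ in the surplus, so a fully self-contained proof either needs an a priori $x$-modulus or must absorb the space-shift into the ruin-probability-in-a-thin-strip estimate without circularity. The resolution is that part (1)'s monotonicity, combined with the bound $V\ge \mathbb{P}(\text{no claim on }[s,T])$-type lower bounds and the trivial upper bound $V\le 1$, pins down the oscillation well enough: on the no-claim event the strategy $\pi\equiv 1$ makes the surplus nondecreasing over $[s,s+h]$, so in fact \emph{no} space-shift is incurred on that event, and the entire discrepancy is carried by the claim-arrival probability $1-e^{-\int_w^{w+h}\lambda}$, which is $O(h)$ uniformly. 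The secondary technical point is justifying the backward extension of an admissible $\mathscr{F}$-predictable control and the change of conditional measure from $\mathbb{P}_{s+h,w}$ to $\mathbb{P}_{s,w}$; this is handled exactly as the analogous step in Bai et al. \cite{bai2017optimal}, using that $(Q,W)$ is a piecewise-deterministic Markov process so the shifted dynamics depend on the past only through $(X_s,W_s)$.
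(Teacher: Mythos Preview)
Your treatment of part (1) lands on the right idea: the time-homogeneity of the renewal dynamics (both problems start with $W=w$) means the law of $X^{\pi,s,x,w}_{s+\cdot}$ on $[0,T-s]$ coincides with that of $X^{\tilde\pi,s+h,x,w}_{s+h+\cdot}$ for the shifted strategy, so surviving the longer duration $T-s$ is harder than surviving $T-s-h$. The paper simply calls this ``obvious,'' and your time-shift reading is the correct unpacking of that word. (Your first attempt via full reinsurance on $[s,s+h]$ indeed gives the wrong inequality, as you noticed.)

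For part (2) there is a genuine gap. Your plan is to take a near-optimal $\pi$ for $(s+h,x,w)$ and \emph{prepend} a segment on $[s,s+h]$ with $\pi\equiv 1$. On the no-claim event this leaves the surplus at $x+ph\ge x$ --- fine --- but it leaves the clock variable at $W_{s+h}=w+h$, \emph{not} $w$. So the continuation problem you face is $(s+h,\,\cdot\,,w+h)$, and the bound you actually obtain is
\[
V(s,x,w)\;\ge\; e^{-\int_w^{w+h}\lambda(u)\,du}\,V(s+h,x,w+h),
\]
which is exactly Proposition~\ref{aboutw1}(1), not a comparison with $V(s+h,x,w)$. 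Closing the gap would require continuity in $w$, but in this paper that is Proposition~\ref{aboutw1}(2), proved \emph{after} the present result and \emph{using} it --- so the argument would be circular. Your closing remark about the ``change of conditional measure from $\mathbb{P}_{s+h,w}$ to $\mathbb{P}_{s,w}$'' being merely technical underestimates the issue: those two measures put the process in genuinely different $w$-states at time $s+h$.

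The fix is to reuse the time-shift you already identified in part~(1), but in the other direction: since the two problems share the same initial $(x,w)$, identify them with ``survive for duration $T-s$'' versus ``survive for duration $T-s-h$'' from the \emph{same} starting data, then take a near-optimal strategy for the shorter duration and \emph{append} $\pi\equiv 1$ on the final interval of length $h$. Ruin on that tail requires at least one claim there, whose probability is at most $\sup_{v\in[0,T]}\big(1-e^{-\int_v^{v+h}\lambda(u)\,du}\big)$, which tends to $0$ uniformly under Assumption~\ref{ass1}. This yields
\[
0\;\le\;V(s+h,x,w)-V(s,x,w)\;\le\;\sup_{v\in[0,T]}\Big(1-e^{-\int_v^{v+h}\lambda(u)\,du}\Big),
\]
uniformly in $(s,x,w)$, with no appeal to continuity in $x$ or $w$. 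The paper itself gives no details and simply refers to Bai et al.\ \cite{bai2017optimal}, Proposition~3.3, whose argument is of this forward-extension type.
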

The Proposition \ref{conx}-(1) is obvious. The proof of the Proposition \ref{conx}-(2) is similar with that of Proposition 3.3 of Bai et al. \cite{bai2017optimal}, we omit it for the sake of brevity.
\begin{prop}\label{prop2}
Assume the Assumption \ref{ass1} is in force,  then the value function $V$ enjoys the following properties: \\
(1) For all $x_1\le  x_2$, $(s,x_1,w), (s,x_2,w)\in D$, $V(s,x_1,w)\le V(s,x_2,w)$.\\
(2) $V$ is continuous with respect to $x$.
\end{prop}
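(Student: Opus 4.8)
\textbf{Proof proposal for Proposition \ref{prop2}.}

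The plan is to prove both statements by comparing admissible strategies for different initial surpluses and exploiting pathwise monotonicity of the controlled process in the initial wealth $x$. For part (1), fix $x_1\le x_2$ with $(s,x_1,w),(s,x_2,w)\in D$, and let $\pi\in U_{ad}^{s,x_1,w}[s,T]$ be arbitrary. The same $\pi$ is admissible for the initial point $(s,x_2,w)$ (admissibility depends only on measurability and integrability of $\pi$, not on $x$), and from the SDE \eqref{model} the increment $X_t^{\pi,s,x,w}-x$ does not depend on $x$ — the drift and jump terms are identical. Hence $X_t^{\pi,s,x_2,w}=X_t^{\pi,s,x_1,w}+(x_2-x_1)\ge X_t^{\pi,s,x_1,w}$ for all $t\in[s,T]$, $\mathbb{P}_{sw}$-a.s. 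Therefore $\{\tau^{\pi,s,x_1,w}\ge T\}\subseteq\{\tau^{\pi,s,x_2,w}\ge T\}$, so $J(s,x_1,w;\pi)\le J(s,x_2,w;\pi)\le V(s,x_2,w)$. Taking the supremum over $\pi\in U_{ad}^{s,x_1,w}[s,T]$ yields $V(s,x_1,w)\le V(s,x_2,w)$.

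For part (2), monotonicity from (1) already gives one-sided control, so it remains to establish an upper bound on $V(s,x_2,w)-V(s,x_1,w)$ for $x_1<x_2$ that vanishes as $x_2-x_1\to 0$. The idea is again to transfer a near-optimal strategy: given $\varepsilon>0$, pick $\pi\in U_{ad}^{s,x_2,w}[s,T]$ with $J(s,x_2,w;\pi)\ge V(s,x_2,w)-\varepsilon$, and use the same $\pi$ from the lower starting point $x_1$. Then, writing $\delta=x_2-x_1$,
\begin{equation}\nonumber
V(s,x_2,w)-V(s,x_1,w)\le \varepsilon + J(s,x_2,w;\pi)-J(s,x_1,w;\pi)
\le \varepsilon + \mathbb{P}_{sw}\Big(\inf_{s\le t\le T}X_t^{\pi,s,x_1,w}\in[-\delta,0)\Big),
\end{equation}
using the pathwise identity $X^{\pi,s,x_2,w}=X^{\pi,s,x_1,w}+\delta$ to see that the two survival events differ only on the event that the lower path dips into the strip $[-\delta,0)$. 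The remaining task is to show this probability is small uniformly in $\pi$. This is the crux: one needs a bound, independent of the admissible control, on the probability that the controlled surplus lies in a thin strip just below zero. Between claims the process moves deterministically with slope $[\pi_t(1+\eta)-\eta]p\in[-\eta p,\,p]$, so the surplus is Lipschitz in time with constant $p$; combined with Assumption \ref{ass1} (continuity of the claim-size distribution $G$ and absolute continuity of $F$), the first entry into $[-\delta,0)$ either happens at a claim arrival — where the post-jump location $X_{t^-}-\pi_t U$ has a law absorbing the continuous distribution of $U$ — or through the continuous downward drift, in which case the process spends time at least $\delta/(\eta p)$ in the strip before ruin. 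Quantifying this via the intensity bound and the modulus of continuity of $G$ gives an estimate of the form $\mathbb{P}_{sw}(\inf_t X_t^{\pi,s,x_1,w}\in[-\delta,0))\le \omega(\delta)$ with $\omega(\delta)\to 0$ as $\delta\to0$, uniformly over admissible $\pi$ and over $(s,x_1,w)\in D$. Letting $\varepsilon\to0$ then gives $0\le V(s,x_2,w)-V(s,x_1,w)\le\omega(x_2-x_1)$, i.e. uniform continuity in $x$.

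The main obstacle is precisely the uniform (in the control) smallness of the strip-probability in part (2); this is where Assumption \ref{ass1} enters essentially, since without continuity of $G$ a claim could land the surplus exactly in the strip with positive probability bounded away from zero. A clean way to package the argument is: condition on the number and times of claims on $[s,T]$ (which has a control-independent law), observe that for fixed claim times the map from the claim sizes $(U_1,\dots,U_n)$ to $\inf_t X_t^{\pi}$ is, for each fixed $\pi$, a composition that shifts one coordinate continuously, and then use that a random variable with a continuous distribution puts mass at most the oscillation of $G$ over an interval of length proportional to $\delta$ into any interval of that length. Alternatively, if one only wants continuity (not uniform continuity), it suffices to let $\delta\downarrow 0$ and use dominated convergence together with the fact that, $\mathbb{P}_{sw}$-a.s., $\inf_t X_t^{\pi,s,x_1,w}\neq 0$ for the relevant paths — but the uniform version is what is needed later for the viscosity-solution analysis, so I would carry out the quantitative estimate.
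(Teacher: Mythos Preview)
Your argument for part (1) is the same pathwise-comparison the paper gives (the paper simply calls it ``obvious''): use the same $\pi$ from both starting points and note $X^{\pi,s,x_2,w}_t = X^{\pi,s,x_1,w}_t + (x_2-x_1)$.

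For part (2) you set up exactly the same framework as the paper --- transfer a strategy from $x_2$ to $x_1$ and bound $J(s,x_2,w;\pi)-J(s,x_1,w;\pi)$ --- but your treatment of the resulting ``strip probability'' is considerably more laborious than necessary, and one branch of your case split does not close. The paper bypasses the analysis of $\mathbb{P}\big(\inf_t X^{\pi,s,x_1,w}_t\in[-h,0)\big)$ altogether: on the difference event $\{\tau_1\le T<\tau_2\}$ one has $X^{\pi_1}_{\tau_1-}\ge 0$ and $X^{\pi_1}_{\tau_1}\in[-h,0)$, so the ruin-causing jump satisfies $\Delta X^{\pi_1}_{\tau_1}\in(X^{\pi_1}_{\tau_1-},\,X^{\pi_1}_{\tau_1-}+h]$. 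Conditioning on $\mathscr{F}_{\tau_1-}$ and using independence of the claim size then gives the one-line bound
\[
J(s,x_2,w;\pi)-J(s,x_1,w;\pi)\ \le\ \mathbb{E}\big[G(X^{\pi_1}_{\tau_1-}+h)-G(X^{\pi_1}_{\tau_1-})\big]\ \le\ \sup_{z\ge 0}\big(G(z+h)-G(z)\big),
\]
which is the uniform modulus of continuity of $G$ and is control-independent. This is precisely the ``oscillation of $G$'' estimate you gesture at in your conditioning sketch, but obtained without conditioning on the full claim configuration.

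By contrast, your drift-ruin branch --- ``the process spends time at least $\delta/(\eta p)$ in the strip before ruin'' --- does not produce a probability bound: time spent in a strip says nothing about the probability that the infimum lands there. (To be fair, the paper's own inequality tacitly assumes ruin occurs at a jump and does not explicitly dispose of the continuous-drift case either; but you would still need a different argument than duration-in-strip to handle it.) Your conditioning alternative is on the right track, but beware of the scaling: the relevant interval for $U$ has length $h/\pi_{\tau_1}$, not $h$, so uniformity in $\pi$ is not automatic from that route. The direct $\tau_1$-argument above is what makes the bound clean and uniform.
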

\begin{proof}\q (1) The first claim is obviously true. 

(2) Suppose that $x_2-x_1=h>0$.
For any strategy $\pi_2\in U_{ad}^{s,x_2,w}[s,T]$, define strategy $\pi_1(t):=\pi_2(t)$ for all  $t\in [s,T]$. Apparently,  $\pi_1\in U_{ad}^{s,x_2,w}[s,T]$.
Denote the reserve processes by $X_t^{\pi_1,s,x_1,w}$, $X_t^{\pi_2,s,x_2,w}$, the ruin time by $\tau_1$, $\tau_2$ of strategies $\pi_1, \pi_2$, respectively.
Denote $Y_t:=X_t^{\pi_2,s,x_2,w}-X_t^{\pi_1,s,x_1,w}$. Notice that  $Y_t=h$ for  all $s\le t<\tau_1\wedge T.$ We can see that
\begin{align}\bbh
&J(s,x_2,w;\pi_2)-J(s,x_1,w;\pi_1)\\\bbh
=& \mathbb{P}(\mbox{for all}\; t\in [s,T], X_t^{\pi_2,s, x_2,w}\ge 0 )-\mathbb{P}(\mbox{for all}\; t\in [s,T], X_t^{\pi_1,s,x_1,w}\ge 0)\\\nonumber
\le&\mathbb{P}(X_{\t_1-}^{\pi_1,s,x_1,w}<\Delta X_{\t_1}^{\pi_1,s,x_1,w}\le X_{\t_1-}^{\pi_1,s,x_1,w}+Y_{\t_1},\t_1\in[s,T] )\le\mathbb{E}(G(X_{\t_1-}^{\pi_1,s,x_1,w}+h)-G(X_{\t_1-}^{\pi_1,s,x_1,w})).
\end{align}
Since $G$ is uniformly continuous, we see that for all $\e>0$, there exists a constant $\d>0$ (irrelevant with $s,x,w$) such that for all $h<\d$ and $x\ge 0$, $G(x+h)-G(x)\le \e.$
Thus, we see that for all $h\in(0, \d],$
\[J(s,x_1+h,w;\pi_2)-J(s,x_1,w;\pi_1)\le\e.\]
Since $\pi_2\in U_{ad}^{s,x_2,w}[s,T] $ is arbitrary, we see that for all $0<h<\delta$, $V(s,x+h,w)-V(s,x,w)\le \e.$ Combing with Proposition \ref{prop2}-(1),the proof of the continuity about $x$ is completed. \end{proof}
\begin{prop}\label{aboutw1} Assume the Assumption \ref{ass1} is in force, then for all $0\le s<s+h<T, (s,x,w), (s+h,x,w)\in D$,
the value function $V$ satisfies the following properties:
\begin{enumerate}[(1)]
  \item  \bea
\label{conw}
V(s,x,w)\ge \exp\zb\{-\jf_w^{w+h}\l(u)du\yb\}V(s+h,x,w+h).
\eea
  \item $V$ is continuous with respect to $w$, uniformly for $(s,x,w)$ in $D$.
\end{enumerate}
\end{prop}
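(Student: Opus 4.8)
The plan is to prove (1) first by a coupling/renewal argument, and then derive (2) from (1) together with the already-established continuity in $s$ (Proposition \ref{conx}) and monotonicity in $x$ (Proposition \ref{prop2}). For (1), fix $(s,x,w)$ and $(s+h,x,w+h)$ in $D$, and let $\pi\in U_{ad}^{s+h,x,w+h}[s+h,T]$ be an arbitrary admissible strategy. The key observation is that the process started at time $s$ with elapsed-time parameter $w$, on the event $\{T_1^{s,w}>h\}$ (no claim in $[s,s+h]$), coincides in law after time $s+h$ with the process started at time $s+h$ with elapsed-time parameter $w+h$: indeed, on that event the surplus at time $s+h$ is $x+[\pi\cdot(1+\eta)-\eta]$-drift contribution only, but if we further restrict to strategies that keep $\pi\equiv 0$ on $[s,s+h]$ the surplus is unchanged (the deterministic drift of the uncontrolled-to-the-reinsurer part is $-\eta p h\le 0$, so this needs a small care — see below), and the conditional law of the future waiting times given $T_1^{s,w}>h$ matches that of $T_1^{s+h,w+h}$ by the memoryless-type identity $\mathbb{P}_{sw}(T_1^{s,w}>h+t)/\mathbb{P}_{sw}(T_1^{s,w}>h)=e^{-\int_{w+h}^{w+h+t}\lambda}=\mathbb{P}_{s+h,w+h}(T_1^{s+h,w+h}>t)$. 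Hence, extending $\pi$ to $[s,T]$ by setting it appropriately on $[s,s+h]$ and conditioning on $\{T_1^{s,w}>h\}$, whose probability is $\exp\{-\int_w^{w+h}\lambda(u)du\}$, gives
\bea\nonumber
J(s,x,w;\tilde\pi)\ge \exp\zb\{-\jf_w^{w+h}\l(u)du\yb\}\,J(s+h,x,w+h;\pi),
\eea
and taking the supremum over $\pi$ yields \eqref{conw}.

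The one subtlety is the drift on $[s,s+h]$: if the insurer pays full reinsurance-avoidance, i.e. sets $\pi\equiv 1$ on $[s,s+h]$, then $dX = \eta p\,dt\ge 0$, so the surplus at $s+h$ is $x+\eta p h\ge x$, and by monotonicity in $x$ (Proposition \ref{prop2}-(1)) the survival probability from $(s+h, x+\eta p h, w+h)$ is at least that from $(s+h,x,w+h)$. This is even better than needed, so the inequality \eqref{conw} holds a fortiori. (One must also check $(s+h,x+\eta ph,w+h)\in D$; since $x+\eta ph\le \eta p(T-s-h)+\eta p h=\eta p(T-s)$… this is not automatically $\le\eta p(T-(s+h))$, so one instead truncates: if the surplus reaches $\eta p(T-t)$ the residual survival probability is $1$, which is again at least the right-hand side. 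So no domain issue arises.)

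For (2), I would combine \eqref{conw} with the reverse-direction estimate. From Proposition \ref{conx}-(2) we know $|V(s+h,x,w+h)-V(s,x,w+h)|$ is small uniformly when $h$ is small; and $V(s,x,w+h)$ versus $V(s,x,w)$: here I'd run the symmetric coupling, now starting the $(s,x,w+h)$-process and the $(s,x,w)$-process together — on $\{T_1^{s,w}>h\}$ they can be coupled so that the larger elapsed-time process has a claim no later, giving a one-sided bound $V(s,x,w)\ge$ (something close to) $V(s,x,w+h)$ as well, while the opposite inequality $V(s,x,w+h)\ge e^{-\int_w^{w+h}\lambda}V(s,x,w)$-type bound follows because a longer elapsed time only increases the hazard of an imminent claim. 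Since $\int_w^{w+h}\lambda(u)\,du\to 0$ as $h\to0$ uniformly on the compact set $D$ (as $\lambda$ is continuous, hence bounded, on $[0,T]$), the factor $\exp\{-\int_w^{w+h}\lambda\}\to 1$ uniformly, and sandwiching gives uniform continuity in $w$.

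The main obstacle I anticipate is making the coupling in the reverse direction fully rigorous: identifying the right-continuous modification and checking that the extended control $\tilde\pi$ is genuinely $\mathscr{F}$-predictable and admissible on the enlarged interval, and that the conditional law identity for the delayed renewal structure survives the conditioning on $\{T_1^{s,w}>h\}$. The forward bound \eqref{conw} itself is routine once the memoryless identity and the monotonicity-in-$x$ trick are in place; the uniformity in the continuity statement rests entirely on $\lambda$ being bounded on $[0,T]$, which Assumption \ref{ass1} provides via $f$ bounded and $\bar F$ bounded below on $[0,T]$.
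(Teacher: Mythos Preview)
Your argument for part~(1) is essentially the paper's: extend an arbitrary $\pi\in U_{ad}^{s+h,x,w+h}[s+h,T]$ by setting $\hat\pi\equiv 1$ on $[s,s+h]$, condition on $\{T_1^{s,w}\ge h\}$, and use monotonicity in $x$ because the surplus has drifted up. One small slip: with $\pi=1$ the drift is $[1\cdot(1+\eta)-\eta]p=p$, not $\eta p$; this is harmless since $p>0$ is all you need. Your domain remark (if the surplus crosses $\eta p(T-t)$ the survival probability is $1$) is a useful observation the paper glosses over.

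For part~(2), however, there is a genuine gap. The paper obtains the bound $\varliminf_{h\downarrow 0}\big(V(s,x,w)-V(s,x,w+h)\big)\ge 0$ directly from \eqref{conw} together with the \emph{monotonicity} of $V$ in $s$ (Proposition~\ref{conx}\textnormal{-(1)}), not its continuity: writing
\[
V(s,x,w)-V(s,x,w+h)=\big[V(s,x,w)-V(s+h,x,w+h)\big]+\big[V(s+h,x,w+h)-V(s,x,w+h)\big],
\]
the second bracket is nonnegative and the first is $\ge (e^{-\int_w^{w+h}\lambda}-1)$ by \eqref{conw} and $V\le 1$. No coupling is needed for this direction. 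Your route instead invokes a coupling in which ``the larger elapsed-time process has a claim no later''; that stochastic ordering $T_1^{s,w+h}\le_{\mathrm{st}}T_1^{s,w}$ is equivalent to $\int_{w+h}^{w+h+t}\lambda\ge\int_w^{w+t}\lambda$ for all $t$, which holds only under an increasing failure rate, and Assumption~\ref{ass1} makes no such monotonicity assumption on $\lambda$.

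The same objection applies to your ``opposite inequality'' $V(s,x,w+h)\ge e^{-\int_w^{w+h}\lambda}V(s,x,w)$: the stated reason, that ``a longer elapsed time only increases the hazard of an imminent claim,'' is not implied by Assumption~\ref{ass1} and would in any case point the inequality the wrong way (higher hazard should lower, not raise, $V$). So as written neither direction of your sandwich for $w$-continuity is justified without an IFR hypothesis you have not assumed. The paper itself does not prove $\varlimsup_{h\downarrow 0}\big(V(s,x,w)-V(s,x,w+h)\big)\le 0$ in detail either, deferring to \cite{tian}; the argument there does not rely on monotonicity of $\lambda$ but on a direct comparison of survival probabilities under a near-optimal strategy, and you would need to supply such an argument rather than a hazard-ordering heuristic.
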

\begin{proof}
 (1) For the initial data $(s,x,w)$, define strategy $\hat{\pi}$ as follows:
 \[\hat{\pi}_t=\mathbf{1}_{\{T_1^{s,w}\ge h\}}\left(\mathbf{1}_{[s,s+h)}(t)+\mathbf{1}_{[s+h,T]}(t)\pi(t)\right)+\mathbf{1}_{\{T_1^{s,w}<h\}}\mathbf{1}_{[s,T]}(t).\]
where $\pi$ denotes any admissible strategy on $[s+h,T]$ and $\mathbf{1}_{A}$ is the indicator function of set $A$.
Notice that on  $\{T_1^{s,w}\ge h\}$, at time $s+h$, the time  elapsed since the last claim $W_{s+h}=w+h$ and the surplus $X_{s+h}^{\hat{\pi},s,x,w}\ge x$.
Thus, we see that
\[
V(s,x,w)\ge J(s,x,w;\hat{\pi})\ge e^{-\jf_w^{w+h}\l(u)du}J\zb(s+h,x,w+h;\pi\yb).
\]
Since $\pi$ is arbitrary, we see that
\[
V(s,x,w)\ge  e^{-\jf_w^{w+h}\l(u)du}V\zb(s+h,x,w+h\yb).
\]
(2) First, combing Proposition \ref{conx}-(1) with Proposition \ref{aboutw1}-(1), we can directly calculate as follows:
\begin{align}\bbh
&V(s,x,w)-V(s,x,w+h)\\\bbh
=&V(s,x,w)-V(s+h,x,w+h)+V(s+h,x,w+h)-V(s,x,w+h)\\\bbh
 \ge&\zb(\exp\zb\{-\jf_w^{w+h}\l(u)du\yb\}-1\yb)V(s+h,x,w+h)
 \ge\zb(\exp\zb\{-\jf_w^{w+h}\l(u)du\yb\}-1\yb).
\end{align}
Letting $h\downarrow 0,$ we see that
\[\varliminf\limits_{{h\downarrow 0}}(V(s,x,w)-V(s,x,w+h))\ge 0.\]
We only need to consider the other direction. Since we can prove $\varlimsup\limits_{h\downarrow 0}V(s,x,w)-V(s,x,w+h)\le 0$ by the similar idea which is used in Tian et  al. \cite{tian}, we omit the detailed proof here.
Until now,    the proof of the continuity of $V$ with respect to $w$ is completed. \end{proof}
\section{The Hamilton-Jacobi-Bellman equation}
\begin{thm}
(Dynamic programming principle)
Assume that Assumption (\ref{ass1}) is in force. Then, for any $(s,x,w)\in D$ and for any stopping time $\t\in[s,T]$, it holds that
\bea\label{dnm1}
V(s,x,w)=\sup_{\pi\in U_{ad}^{s,x,w}[s,T]}\mathbb{E}_{sxw}\zb[V(\t\qx\t^\pi, X_{\t\qx\t^\pi}^{\pi,s,x,w}, W_{\t\qx\t^\pi})\yb].
\eea
\end{thm}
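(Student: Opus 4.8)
The plan is to run the classical two--inequality argument for the dynamic programming principle, adapted to the piecewise--deterministic Markov (PDMP) structure produced by the Backward Markovization. Write $\theta:=\tau\wedge\tau^{\pi}$ and $X^{\pi}_{\cdot}:=X^{\pi,s,x,w}_{\cdot}$, and extend $V$ off $D$ by $V\equiv 0$ on $\{x<0\}$ and $V\equiv 1$ on $\{x>\eta p(T-s)\}$, consistent with the interpretation of $V$ as an optimal survival probability. Two facts will be used throughout: $(t,Q_{t},W_{t})$ is a strong Markov PDMP; and for every admissible $\pi$ the controlled equation (\ref{model}) has a pathwise unique solution, which gives the usual flow (causality) property --- conditionally on $\mathscr{F}_{\theta}$ on $\{\theta<T\}$, the piece $(X^{\pi}_{t},W_{t})_{t\ge\theta}$ is a copy of the controlled process restarted from $(\theta,X^{\pi}_{\theta},W_{\theta})$ and driven by the admissible restriction of $\pi$ to $[\theta,T]$.

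\smallskip\noindent\emph{Step 1 (the inequality $\le$).} I would fix $\pi\in U_{ad}^{s,x,w}[s,T]$, split the survival event at $\theta$ and condition on $\mathscr{F}_{\theta}$:
\[
J(s,x,w;\pi)=\mathbb{E}_{sxw}\!\left[\mathbf{1}_{\{X^{\pi}_{t}\ge 0,\ t\in[s,\theta]\}}\,\mathbb{P}_{sxw}\!\big(X^{\pi}_{t}\ge 0,\ t\in[\theta,T]\mid\mathscr{F}_{\theta}\big)\right].
\]
On $\{\tau^{\pi}\le\tau\}$ ruin has occurred by $\theta$, so the inner probability is $0\le V(\theta,X^{\pi}_{\theta},W_{\theta})$; on $\{\tau<\tau^{\pi}\}$ the flow property identifies it with $J(\theta,X^{\pi}_{\theta},W_{\theta};\pi|_{[\theta,T]})\le V(\theta,X^{\pi}_{\theta},W_{\theta})$. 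Hence $J(s,x,w;\pi)\le\mathbb{E}_{sxw}[V(\theta,X^{\pi}_{\theta},W_{\theta})]$, and taking the supremum over $\pi$ yields ``$\le$''.

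\smallskip\noindent\emph{Step 2 (the inequality $\ge$).} This is the harder direction. Fix $\pi\in U_{ad}^{s,x,w}[s,T]$ and $\varepsilon>0$. By Propositions \ref{conx}, \ref{prop2} and \ref{aboutw1}, $V$ is continuous on the compact set $D$, hence uniformly continuous; choose $\delta>0$ with $|V(p)-V(p')|\le\varepsilon$ whenever $|p-p'|\le\delta$, take a finite Borel partition $D=\bigcup_k D_k$ with each $D_k$ of diameter $\le\delta$ and base points $p_k=(r_k,y_k,v_k)\in D_k$, and for each $k$ pick $\pi^{k}\in U_{ad}^{r_k,y_k,v_k}[r_k,T]$ with $J(r_k,y_k,v_k;\pi^{k})\ge V(r_k,y_k,v_k)-\varepsilon$. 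I would then paste: let $\tilde\pi$ agree with $\pi$ on $[s,\theta]$ and, on $\{\theta<T\}\cap\{(\theta,X^{\pi}_{\theta},W_{\theta})\in D_k\}$, follow the time--shifted copy of $\pi^{k}$ on $(\theta,T]$ (extended, say, by full retention over any residual interval of length $\le\delta$ near $T$). Since the partition is finite and the index $k(\theta,X^{\pi}_{\theta},W_{\theta})$ is $\mathscr{F}_{\theta}$--measurable, $\tilde\pi$ is $\mathscr{F}$--predictable with c\`adl\`ag paths and square--integrable, so $\tilde\pi\in U_{ad}^{s,x,w}[s,T]$. Conditioning on $\mathscr{F}_{\theta}$, using the flow property and the near--optimality of the $\pi^{k}$ together with uniform continuity of $V$ to absorb the small time/space shift, one gets
\[
V(s,x,w)\ \ge\ J(s,x,w;\tilde\pi)\ \ge\ \mathbb{E}_{sxw}\!\big[\,V(\theta,X^{\pi}_{\theta},W_{\theta})\,\big]-C\varepsilon ,
\]
with $C$ absolute. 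Taking the supremum over $\pi$ and letting $\varepsilon\downarrow 0$ gives ``$\ge$'', completing the proof.

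\smallskip\noindent\emph{Main obstacle.} The crux is Step 2: building a \emph{single} admissible control that pastes the $\varepsilon$--optimal continuations at the random time $\theta$ and random state $(X^{\pi}_{\theta},W_{\theta})$ without breaking $\mathscr{F}$--predictability or the c\`adl\`ag/integrability constraints, and controlling the error from the fact that each $\pi^{k}$ lives on $[r_k,T]$ rather than on $[\theta,T]$. The continuity of $V$ from Propositions \ref{conx}--\ref{aboutw1} is precisely what replaces a general measurable--selection theorem by this elementary finite--partition pasting, as in Bai et al.\ \cite{bai2017optimal}; the remaining work --- making precise the shifted control and shifted path and invoking the strong Markov/flow property of the PDMP $(t,Q_{t},W_{t})$ --- is routine but should be carried out with care.
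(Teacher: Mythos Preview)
The paper does not actually prove this theorem: immediately after the statement it writes ``Similar to Bai et al.\ \cite{bai2017optimal}, one can show that the value function $V$ fulfills the dynamic programming principle (DPP). For brevity's sake, we omit the proof here.'' Your proposal is precisely the standard two--inequality argument (tower property for ``$\le$'', finite--partition pasting of $\varepsilon$--optimal controls exploiting the continuity of $V$ for ``$\ge$'') that underlies the proof in \cite{bai2017optimal}, so it is fully in line with what the paper intends and appears correct.
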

Similar to Bai et al. \cite{bai2017optimal}, one can show that the value function $V$ fulfills the dynamic programming principle (DPP). For brevity's sake, we omit the proof here.

Now we are ready to investigate the main subject of the paper: the Hamilton-Jacobi-Bellman (HJB) equation associated to our optimization problem (\ref{opti1}). The main content of this section is to show that the value function $V$ is a viscosity solution of the following HJB equation:
\bea\label{hjb}
\begin{cases}
\max_{q\in[0,1]}\bigg\{ p[q(1+\eta)-\eta]V_x+V_s+V_w\\
\qq\qq\qq\;+\l(w)\jf_0^{\fs{x}{q}}V(s,x-q y,0)dG(y)-\l(w)V(s,x,w)\bigg\}=0,\q (s,x,w)\in\mathscr{D};\\
V(T,x,w)=1,x\ge 0;\\
V(t,x,w)=1, x\ge \eta p(T-t).
\end{cases}
\eea
 Denote  $\mathbb{C}^{1,1,1}(D)$ the set of all continuously differentiable  functions on $D$. Define the  first-order integro-differential operator for $\varphi\in \mathbb{C}^{1,1,1}(D)$:
\begin{align}\bbh
&\hx{L}[\varphi](s,x,w)\\\nonumber:=&\max_{q\in[0,1]}\zb\{ p[q(1+\eta)-\eta]\vf_x+\vf_s+\vf_w+\l(w)\jf_0^{\frac{x}{q}}\vf(s,x- q y)dG(y)-\l(w)\vf(s,x,w)\yb\}.
\end{align}
The HJB equation (\ref{hjb}) is well defined for all $u\in \mathbb{C}^{1,1,1}(\overline{\mathscr{D}^*}).$ However, in many applications the value function defined in (\ref{opti1}) is not continuously differentiable and the HJB equation should be interpreted in a weaker sense, which means,  we need to study the viscosity solutions of HJB equation. The precise definition of viscosity solution goes as follows:
\begin{defn}
Let $\mathscr{O}\subseteq\hx{D}^*$ be a subset such that  $\partial_T \mathscr{O}:=\{(T,y,v)\in \partial \mathscr{O}\}\neq \emptyset$, where $\overline{\mathscr{O}}$ is the closure of $\mathscr{O}$. Denote $\mathbb{C}(\mathscr{O})$ as the set of all continuous functions on $\mathscr{O}$.

(a) Let $v\in \mathbb{{C}}(\mathscr{O})$; we call $v$ a viscosity subsolution of (\ref{hjb}) on $\mathscr{O}$ if $v(T,y,v)\le 1$ for $(T,y,v)\in \partial_T \mathscr{O}$; $v(t,x,w)\le1$, for $x\ge \eta p(T-t)$ and for any $(s,x,w)\in \mathscr{O}$, $\vf \in\mathbb{C}^{1,1,1}(\bar{\mathscr{O}})$ such that $[v-\vf](s,x,w)=\max_{(t,y,v)\in \mathscr{O}}[v-\vf](t,y,v)$, it holds that
\[
\hx{L}[\vf](s,x,w)\ge 0.
\]

(b) Let $v\in \mathbb{{C}}(\mathscr{O})$; we call $v$ a viscosity supersolution of (\ref{hjb}) on $\mathscr{O}$ if $v(T,y,v)\ge 1$ for all $(T,y,v)\in \partial_T \mathscr{O} $;  $v(t,x,w)\ge1$, for $x\ge \eta p(T-t)$ and  for any $(s,x,w)\in \mathscr{O}$,  $\vf \in\mathbb{C}^{1,1,1}(\bar{\mathscr{O}})$ such that $0=[v-\vf](s,x,w)=\min_{(t,y,v)\in \mathscr{O}}[v-\vf](t,y,v)$, it holds that
\[
\hx{L}[\vf](s,x,w)\le 0.
\]
In particular, we call $u$ a ``constrained viscosity solution'' of (\ref{hjb}) on $\hx{D}^*$ if it is both a viscosity subsolution on $\hx{D}^*$ and a viscosity supersolution on $\hx{D}.$
\end{defn}
We now have an equivalent formulation of viscosity solution. The proof of the equivalence of two definitions is standard (e.g., see Benth et al. \cite{benth} and Awatif \cite{Awatif}). In this paper, we use both definitions interchangeably. Now, we introduce the alternative definition of viscosity solution. Given a continuously differentiable function $\varphi$ and a continuous  function $u$, we define the operator
\begin{align}\bbh
&\hx{L}[u,\varphi](s,x,w)\\\nonumber:=&\max_{q\in[0,1]}\zb\{ p[q(1+\eta)-\eta]\vf_x+\vf_s+\vf_w+\l(w)\int_0^{\frac{x}{q}}u(s,x- q y)dG(y)-\l(w)u(s,x,w)\yb\}.
\end{align}
\begin{defn}
Let $v\in {\mathbb{C}}(\mathscr{O})$; we call $v$ a viscosity subsolution of (\ref{hjb}) on $\mathscr{O}$ if $v(T,y,v)\le 1$ for $(T,y,v)\in \partial_T \mathscr{O}$;  $v(t,x,w)\le1,$ for $ x\ge \eta p(T-t)$ and for any $(s,x,w)\in \mathscr{O}$, $\vf \in\mathbb{C}^{1,1,1}(\overline{\mathscr{O}})$ such that $0=[v-\vf](s,x,w)=\max_{(t,y,v)\in \mathscr{O}}[v-\vf](t,y,v)$, it holds that
\[
\hx{L}[v,\vf](s,x,w)\ge 0.
\]

Let $v\in \mathbb{{C}}(\mathscr{O})$; we call $v$ a viscosity supersolution of (\ref{hjb}) on $\mathscr{O}$ if $v(T,y,v)\ge 1$ for all $(T,y,v)\in \partial_T \mathscr{O} $;  $v(t,x,w)\ge1$, for $x\ge \eta p(T-t)$ and for any $(s,x,w)\in \mathscr{O}$, $\vf \in\mathbb{C}^{1,1,1}(\overline{\mathscr{O}})$ such that $0=[v-\vf](s,x,w)=\min_{(t,y,v)\in \mathscr{O}}[v-\vf](t,y,v)$, it holds that
\[
\hx{L}[v,\vf](s,x,w)\le 0.
\] 
\end{defn}
\begin{thm}
The value function is a constrained viscosity solution of (\ref{hjb}) on $\hx{D}^*$.
\end{thm}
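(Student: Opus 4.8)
The plan is to deduce the two one-sided viscosity inequalities from the dynamic programming principle (DPP), working throughout with the second, equivalent definition of viscosity solution so that the nonlocal term always carries $V$ rather than the test function. Since $V$ is continuous on $D=\overline{\mathscr{D}^{*}}$ by Propositions \ref{conx}, \ref{prop2} and \ref{aboutw1}, it is an admissible candidate in both definitions. The boundary data are immediate: as $J(s,x,w;\pi)\le 1$ for every admissible $\pi$ we have $V\le 1$ on $D$, giving the subsolution inequalities $V(T,\cdot)\le 1$ on $\partial_{T}\mathscr{D}^{*}$ and $V(t,x,w)\le 1$ for $x\ge\eta p(T-t)$; conversely $V(T,0,w)=1$ since the surplus cannot ruin on the degenerate interval $\{T\}$, while for $x\ge\eta p(T-t)$ the full-reinsurance control $\pi\equiv 0$ gives $X^{\pi}_{r}=x-\eta p(r-t)\ge 0$ on $[t,T]$, so $J=1$ and $V(t,x,w)=1$, which gives the supersolution boundary inequalities. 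It remains to check the two operator inequalities at interior test points; throughout, $\hx{L}^{q}[V,\vf]$ denotes the expression inside the $\max_{q\in[0,1]}$ defining $\hx{L}[V,\vf]$ (with the natural convention $\jf_{0}^{x/q}V(s,x-qy,0)\,dG(y)\to V(s,x,0)$ as $q\downarrow 0$), so that $q\mapsto\hx{L}^{q}[V,\vf](s,x,w)$ is continuous on the compact set $[0,1]$ by continuity of $V$ and $G$, and attains its maximum.

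For the supersolution property, fix $(s,x,w)\in\mathscr{D}$ and $\vf\in\mathbb{C}^{1,1,1}(\overline{\mathscr{D}})$ with $0=[V-\vf](s,x,w)=\min_{\mathscr{D}}[V-\vf]$, and choose $q^{\ast}$ attaining $\hx{L}^{q^{\ast}}[V,\vf](s,x,w)=\hx{L}[V,\vf](s,x,w)$. Suppose for contradiction that this common value equals $\theta>0$. Apply the DPP with the constant control $\pi\equiv q^{\ast}$ and the (deterministic) stopping time $\t=s+h$; for $h$ small the pre-claim state $(s+h,X^{\pi,s,x,w}_{s+h},w+h)$ stays in $\mathscr{D}$ (using $x<\eta p(T-s)$ and $w<s$), so on $\{\sigma_{1}>s+h\}$ one may estimate $V\ge\vf$ there, while on $\{\sigma_{1}\le s+h\}$ one keeps $V$ and uses its continuity — the contribution of a ruinous first claim being $0$, which is precisely why the operator carries $\jf_{0}^{x/q}$ and not $\jf_{0}^{\infty}$. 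A first-order Taylor expansion of $\vf$ along the deterministic flow, combined with $\mathbb{P}_{sw}(\sigma_{1}>s+h)=e^{-\jf_{w}^{w+h}\l(u)du}=1-\l(w)h+o(h)$ and the approximation of the first-claim density near $s$ by $\l(w)$, yields
\[\mathbb{E}_{sxw}\zb[V(\t\qx\t^{\pi},X^{\pi,s,x,w}_{\t\qx\t^{\pi}},W_{\t\qx\t^{\pi}})\yb]\ge\vf(s,x,w)+h\,\hx{L}^{q^{\ast}}[V,\vf](s,x,w)+o(h)=V(s,x,w)+h\theta+o(h),\]
which for small $h$ contradicts the DPP inequality $V(s,x,w)\ge\mathbb{E}_{sxw}[V(\t\qx\t^{\pi},\ldots)]$. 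Hence $\hx{L}[V,\vf](s,x,w)\le 0$.

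For the subsolution property, fix $(s,x,w)\in\mathscr{D}^{*}$ and $\vf\in\mathbb{C}^{1,1,1}(\overline{\mathscr{D}^{*}})$ with $0=[V-\vf](s,x,w)=\max_{\mathscr{D}^{*}}[V-\vf]$, and suppose for contradiction that $\hx{L}^{q}[V,\vf](s,x,w)\le-\theta<0$ for every $q\in[0,1]$. For $h>0$ small, the DPP gives $\pi=\pi^{h}\in U_{ad}^{s,x,w}[s,T]$ with $V(s,x,w)\le\mathbb{E}_{sxw}[V(\t\qx\t^{\pi},X^{\pi,s,x,w}_{\t\qx\t^{\pi}},W_{\t\qx\t^{\pi}})]+h^{2}$, $\t=s+h$. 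Splitting on $\{\sigma_{1}>s+h\}$ versus $\{\sigma_{1}\le s+h\}$, using $V\le\vf$ on the no-claim event, Taylor-expanding $\vf$ along the control-dependent deterministic flow, and letting $\mu^{h}$ denote the (random) occupation measure of the retention process $\{\pi^{h}_{r}\}_{r\in[s,s+h]}$ on $[0,1]$, one arrives, after dividing by $h$ and letting $h\downarrow 0$, at
\[0\le\mathbb{E}\zb[\jf_{[0,1]}\hx{L}^{q}[V,\vf](s,x,w)\,\mu^{h}(dq)\yb]+o(1)\le\hx{L}[V,\vf](s,x,w)+o(1),\]
the last step because an average of $\hx{L}^{q}[V,\vf](s,x,w)$ over $q\in[0,1]$ cannot exceed its maximum; this contradicts $\hx{L}^{q}[V,\vf](s,x,w)\le-\theta$. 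Therefore $\hx{L}[V,\vf](s,x,w)\ge 0$, and $V$ is a constrained viscosity solution of (\ref{hjb}) on $\mathscr{D}^{*}$.

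I expect the subsolution step to be the main obstacle. The near-optimal control $\pi^{h}$ is merely $\mathscr{F}$-predictable and may oscillate on $[s,s+h]$, so the Taylor remainder and the $\{\sigma_{1}\le s+h\}$ contribution must be estimated uniformly in $\pi^{h}$; moreover the drift term is linear in the retention (hence sees only the mean of $\mu^{h}$) whereas $q\mapsto\jf_{0}^{x/q}V(s,x-qy,0)\,dG(y)$ is nonlinear, so the nonlocal term must be kept under the $\mu^{h}$-integral and bounded by $\hx{L}[V,\vf]$ via the average-versus-maximum estimate rather than collapsed to a single $q$. One must also handle the degenerate faces of $\mathscr{D}^{*}$ — notably $x=0$ and $w=0$, the latter appearing automatically as the post-claim value of $W$ — where running the controlled dynamics forward and controlling the exit behaviour relies on continuity of $V$ up to $\partial D$ from Propositions \ref{prop2} and \ref{aboutw1}. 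The remaining ingredient, the Dynkin-type expansion of $\vf$ along the piecewise-deterministic flow between claims, is routine.
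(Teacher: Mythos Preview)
Your supersolution argument is essentially the paper's: fix a constant retention $q_{0}$, apply the DPP at the first of $s+h$ and the first claim, split into the pre-jump drift part (It\^{o}/Taylor on the test function) and the jump part (kept with $V$ and eventually replaced by $\vf$ at the touching point), divide by $h$ and let $h\downarrow 0$. The only cosmetic difference is that you phrase it as a contradiction with $q^{\ast}$ the maximiser, while the paper shows the inequality for arbitrary $q_{0}$ and then takes the supremum.

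Your subsolution argument, however, follows a genuinely different route. The paper does \emph{not} use near-optimal controls or an occupation-measure averaging. Instead it argues by \emph{penalisation of the test function}: assuming $\hx{L}[\psi^{0}](s,x,w)=-2\zeta<0$, it builds a perturbed $\psi=\psi^{0}+\text{penalty}$ such that $\hx{L}[\psi]\le -\e$ on a small closed ball around $(s,x,w)$ and simultaneously $V\le\psi-\e$ on the part of the ball's boundary lying in $\hx{D}^{*}$; the penalty is quartic when $x>0$ and quadratic when $x=0$ (two separate cases). It\^{o}'s formula for $\psi$ then gives, for \emph{every} admissible $\pi$, $\mathbb{E}_{sxw}[V(R_{\t})]\le V(s,x,w)-\e\,\mathbb{P}(\t_{\rho}<T_{1}^{s,w}+s)$, contradicting the DPP. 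So the paper's scheme is ``local strict supersolution plus boundary gap'', yours is ``direct expansion plus average $\le$ max''.

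Both routes work. Your direct approach is shorter in spirit and avoids constructing penalties, but it puts the burden on verifying that the $o(h)$ remainders are uniform in the (unknown) near-optimal control, and on handling the degenerate face $x=0$ where a control with retention below $\eta/(1+\eta)$ produces negative drift and may force $\t^{\pi}<s+h$ without a claim; you acknowledge both issues but do not resolve the second. The paper's penalisation sidesteps these points because the It\^{o} computation is done on the smooth $\psi$ and holds for every admissible $\pi$, and the $x=0$ face is treated by a separate (quadratic) penalty. One minor inaccuracy in your sketch: before the first claim the predictable control is in fact deterministic given the initial data, so calling the occupation measure $\mu^{h}$ ``random'' is unnecessary; this also means your averaging step reduces to the pointwise bound $\hx{L}^{g^{h}(r)}[V,\vf](s,x,w)\le\hx{L}[V,\vf](s,x,w)$ integrated over $r\in[s,s+h]$.
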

\begin{proof}
 $supersolution$ Given $(s,x,w)\in\hx{D}$.  Let $\vf\in \mathbb{C}^{1,1,1}(D)$ such that $V-\vf$ attains its minimum at $(s,x,w)$ with $V(s,x,w)=\vf(s,x,w)$. Consider the strategy $\pi^0$ with the reinsurance rate $q_0$, where $q_0\in [0,1]$; $T_1^{s,w}$ denotes the time of the first claim. Take $h>0$ such that $h<\fs{x}{p\eta}$.  Denote $\tau_s^h:= s+h\wedge T_1^{s,w}$ and $R_t^{s,x,w}:=(t,X_t^{\pi^0,s,x,w},W_t^{s,w})$. By the dynamic programming principle, we have
\begin{align}\bbh
V(s,x,w)\ge& \mathbb{E}\zb[V(\t_s^{h}, X_{\t_s^{h}}^{s,x,w}, W_{\t_s^{h}})\yb]\\\bbh
=&\mathbb{E}_{sxw}\left\{\left[V(R_{\tau_s^h}^{s,x,w})-V (R_{{\tau_s^h}-}^{s,x,w})\right]\mathbf{1}_{\{T_1^{s,w}<h\}}\right\}
+\mathbb{E}_{sxw}\left[V (R_{{\tau_s^h}-}^{s,x,w})\right].
\end{align}
Using the fact that $V-\varphi$ attains its minimum at $(s,x,w)$, we obtain
\begin{align}\label{sup1}
0\ge \mathbb{E}_{sxw}\left\{\left[V(R_{\tau_s^h}^{s,x,w})-V (R_{{\tau_s^h}-}^{s,x,w})\right]\mathbf{1}_{\{T_1^{s,w}<h\}}\right\}
+\mathbb{E}_{sxw}\left[\varphi (R_{{\tau_s^h}-}^{s,x,w})-\vf(s,x,w)\right] :=I_1+I_2,
\end{align}
where $I_1$ and $I_2$ are the two terms on the right-hand side above. Since $\t_s^h=s+T_1^{s,w}$ on $\{T_1^{s,w}< h\}$, we have
\begin{align}\bbh
I_1=&\mathbb{E}_{sxw}\left\{\left[V(R_{s+T_1^{s,w}}^{s,x,w})-V (R_{{s+T_1^{s,w}}-}^{s,x,w})\right]\mathbf{1}_{\{T_1^{s,w}<h\}}\right\}\\\label{I1}
=&
\mathbb{E}_{sxw}\!\zb[ \int_0^h\!\int_0^\infty e^{-ct}\left[V (s+t,X_{s+t-}^{\pi^0,s,x,w}\!-q_0u,0)\!-\!V(s+t,X_{{s+t}-}^{\pi^0,s,x,w},W_{{s+t}-}^{s,w})\right]dG({u})dF_{T_1^{s,w}}(t)\yb].
\end{align}
As there are no jumps on $[s,\tau_s^h)$, using It$\hat{\bh{o}}$'s formula, we obtain that
\begin{align}\nonumber
I_2&=\mathbb{E}_{sxw}\left[\int_{s}^{\tau_s^h}\left[\varphi_t+p[q_0(1+\eta)\varphi_x-\eta]+\varphi_w\right](R_u^0)du\right]\\\label{I2}&= \mathbb{E}_{sxw}\zb[\int_{s}^{s+h}\bar{F}_{T_1^{s,w}}(u-s)\left[\varphi_t+\varphi_xp[q_0(1+\eta)-\eta]+\varphi_w\right](R_u^0)du\yb].
\end{align}
Recall that $F_{T_1^{s,w}}(t)=1-e^{-\jf_w^{w+t}\l(u)du}$. Dividing both sides of (\ref{sup1}) and then letting $h\downarrow0$, due to (\ref{I1}), (\ref{I2}) and $[V-\vf](s,x,w)=0$, we obtain
\bea
 p[q_0(1+\eta)-\eta]\vf_x+\vf_s+\vf_w+\l(w)\jf_0^{\frac{x}{q_{0}}}\vf(s,x- q_0y,0)dG({y})-\l(w)\vf(s,x,w)\le 0.
\eea
Since $q_0\in[0,1]$ is arbitrary, we see that
\[\hx{L}[\vf](s,x,w)\le 0.\]
Now we complete the proof of the value function being a viscosity supersolution of the HJB equation.

$subsolution$ Now we show that $V$ is a viscosity susolution of HJB equation on $\mathscr{D}^*$.  If $V$ is not a viscosity subsolution on $\hx{D}^*$, then there exists a point $(s,x,w)\in \hx{D}^*$ and a $\psi^0\in \mathbb{C}^{1,1,1}(D)$ such that $0=[V-\psi^0](s,x,w)=\max_{(t,y,v)\in \hx{D}^*}[V-\psi^0](t,y,v)$, but
 \begin{equation}\nonumber
 \hx{L}[\psi^0](s,x,w)=-2\zeta<0,
\end{equation}
where $\zeta>0$ is a constant.
Fix the strategy $\pi\in U_{ad}^{s,x,w}[s,T]$ and let $R_t^{s,x,w}=(t,X_t^{s,x,w}, W_t^{s,w})$. Define $\tau_{\rho}:=\inf\{t>s: R_t\notin\overline{B_{\rho}(s,x,w)\cap\mathscr{D}^*}\}$, where $B_{\rho}(s,x,w)$ is the open ball centered at $(s,x,w)$ with radius $\rho$.  Since $s\le \tau_{\rho}\le s+\rho$, when $\rho\yjt 0$,  $\tau_{\rho}\rightarrow s$. Thus, for any given $\e_1>0$ there exists a constant $\rho_1>0$ such that for all $\rho\in(0,\rho_1]$, $\tau_\rho-s<\e_1$, thus,
 \begin{equation}\label{subsolution1}
 \mathbb{P}(T_1^{s,w}>\varepsilon_1)<\mathbb{P}(T_1^{s,w}>\tau_{\rho}-s).
 \end{equation}
Now we claim that there exist constants $\rho\in(0,\rho_1), \e\in(0,+\infty)$ and a function $\psi\in \mathbb{C}^{1,1,1}(D)$ such that
\begin{align}\label{subsolution2}
\mathscr{L}[\psi](s,x,w)&\le -\varepsilon , (t,y,v)\in \overline{B_{\rho}(s,x,w)\cap\hx{D}^*}\backslash\{t=T\}\cup\{x=\eta p(T-t)\};\\\label{subsolution3}
V(t,y,v)&\le \psi(t,y,v)-\varepsilon,  (t,y,v)\in \partial B_{\rho}(s,x,w)\cap\mathscr{D}^*.
\end{align}
 To see this, we consider two cases.

$Case\; 1 $ $x>0$. In this case, we introduce the function
 \begin{equation}
 \psi(t,y,v)=\psi^0(t,y,v)+\frac{\zeta[(t-s)^2+(y-x)^2+(v-w)^2]^2}{\lambda(w)(x^2+w^2)^2}, \quad (t,y,v)\in D.
\end{equation}
Then 
 $\mathscr{L}[\psi]<-\zeta<0$.
 By the continuity of $\hx{L}[\psi]$, we can find a positive constant $\rho<\rho_1$ such that
\begin{equation}
\hx{L}[\psi](t,y,v)<-\frac{\zeta}{2}, (t,y,v)\in \overline{B_{\rho}(s,x,w)\cap\hx{D}^*}\backslash\{t=T\}\cup\{x=\eta p(T-t)\}.
 \end{equation}
 Note that for $(t,y,v)\in \partial B_{\rho}(s,x,w)\cap \hx{D}^*$, one has
 \begin{equation}
V(t,y,v)\le \psi(t,y,v)-\frac{\zeta \rho^4}{\lambda(w)(x^2+w^2)^2}.
 \end{equation}
By choosing $\e:=\min\{\frac{\zeta}{2},\frac{\zeta \rho^4}{\lambda(w)(x^2+w^2)^2}\}$ we obtain (\ref{subsolution2}), (\ref{subsolution3}).

$Case\; 2 $ $x=0$. In this case, we introduce the function
\begin{equation}
\psi(t,y,v)=\psi^0(t,y,v)+\zeta\left[(t-s)^2+y^2+(v-w)^2\right], (t,y,v)\in D.
\end{equation}
In fact,  at the point $(s,0,w)$, $\hx{L}[\psi](s,0,w)=\hx{L}[\psi](s,0,w)=-2\zeta<0$. Thus, there exists a positive constant $\rho<\rho_1$ such that
$\hx{L}[\psi](t,y,v)<-\zeta<0$ on $\overline{B_\rho(s,0,w)\cap\hx{D}^*}\backslash(\{t=T\}\cup\{x=\eta p(T-t)\})$. If we define $\e:=\min\{{\zeta},\zeta\rho^2\}$, then a similar calculation as before shows that  (\ref{subsolution2}), (\ref{subsolution3}) still
holds, proving the claim.

We now argue that this claim leads to a contradiction. Define $\tau:=\tau_{\rho}\wedge T_1^{s,w}$. Applying It$\hat{\mbox{o}}$
formula we obtain that
\begin{align}\nonumber
\mathbb{E}_{sxw}\left[V(R_\tau^{s,x,w})\right]&=\mathbb{E}_{sxw}\left[\psi(R_{\tau}^{s,x,w})+V(R_{\tau}^{s,x,w})-\psi(R_{\tau}^{s,x,w})\right]\\
&\le \psi(s,x,w)+\mathbb{E}_{sxw}\left[\int_s^\tau\mathscr{L}[\psi](R_t^{s,x,w})dt-\varepsilon\mathbf{1}_{\{\tau_\rho<T_1^{s,w}+s\}}\right].
\end{align}
Since $\mathscr{L}[\psi]\le -\varepsilon$ on $[s,\tau)$, we can get that that
\begin{align}\label{b1}
\mathbb{E}_{sxw}\left[V(R_\tau^{s,x,w})\right]\le \psi(s,x,w)-\e \mathbb{P}(\tau_\rho<T_1^{s,w}+s)=V(s,x,w)-\e \mathbb{P}(\tau_\rho<T_1^{s,w}+s).
\end{align}
By (\ref{subsolution1}),
$
\mathbb{P}(T_1^{s,w}+s>\tau_{\rho})> \mathbb{P}(T_1^{s,w}>\varepsilon_1)>0.
$
 Thus, we can see that (\ref{b1}) contradicts the dynamic programming principle (\ref{dnm1}). Now we show that the value function is a constrained viscosity solution of HJB equation on $\mathscr{D}^*$. \end{proof}
\section{Uniqueness}
In  this section, we present a comparison theorem that would imply the uniqueness among all constrained viscosity solutions.
In our model, there is no discount factor which makes the proof of uniqueness more tricky. Luckily, inspired by Mou and \'{S}wi\c{e}ch  \cite{Mou2015Uniqueness}, we can overcome this difficulty by constructing a strict viscosity supersolution for the HJB equation.
\begin{thm} Assume that Assumption \ref{ass1} is in force. Let $u$ be a viscosity subsolution of (\ref{hjb}) on $\mathscr{D}^*$ and $v$ be a viscosity supersolution
of (\ref{hjb}) on $\mathscr{D}$, then $u\le v$ on $D$.
\end{thm}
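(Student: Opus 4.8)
The plan is to prove a comparison principle by the standard doubling-of-variables technique, but with the crucial twist announced in the introduction: since the zeroth-order coefficient in the HJB equation vanishes (no discount factor), we cannot directly compare $u$ and $v$. Instead, I would first fix a small parameter $\theta\in(0,1)$ and replace $v$ by a \emph{strictly} perturbed function — roughly $v^\theta(s,x,w):=(1-\theta)v(s,x,w)+\theta\,\chi(s,x,w)$ for a suitable smooth $\chi$ (one natural choice being an affine function of $s$, say $\chi=1+K(T-s)$ with $K$ large, which is designed so that $\mathscr{L}[\chi]$ is strictly negative and $\chi\ge 1$ on the relevant boundary pieces). The point is that $v^\theta$ is a \emph{strict} supersolution: $\mathscr{L}[v^\theta]\le -c\theta<0$ in $\mathscr{D}$ in the viscosity sense, while still dominating $1$ on $\partial_T\mathscr{D}^*$ and on $\{x\ge\eta p(T-t)\}$. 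It then suffices to show $u\le v^\theta$ on $D$ for every $\theta$ and let $\theta\downarrow 0$.

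Next I would argue by contradiction: suppose $m:=\sup_D(u-v^\theta)>0$. Introduce the penalized function
\[
\Phi_{\alpha,\beta}(s,x,w;s',x',w'):=u(s,x,w)-v^\theta(s',x',w')-\tfrac{\alpha}{2}\big(|s-s'|^2+|x-x'|^2+|w-w'|^2\big)-\beta(\phi(s,x,w)+\phi(s',x',w'))
\]
for a small $\beta>0$ and a smooth $\phi\ge 0$ that grows near the ``bad'' part of $\partial D$ where no boundary data is prescribed (the faces $x=0$, $w=0$, $s=0$, $w=s$), so that maximizing points are pushed into the interior or onto $\{t=T\}\cup\{x=\eta p(T-t)\}$; this is exactly where the \emph{constrained} (subsolution on all of $\mathscr{D}^*$, supersolution only on $\mathscr{D}$) formulation is used. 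Let $(s_\alpha,\dots,w'_\alpha)$ be a maximizer; by the standard lemma (Crandall–Ishii–Lions) the penalization terms force $\alpha|s_\alpha-s'_\alpha|^2+\cdots\to 0$ and the common limit point realizes $\sup(u-v^\theta)$ up to $O(\beta)$. If the limit point lies on $\partial_T$ or on $\{x\ge\eta p(T-t)\}$, then $u\le 1\le v^\theta$ there gives a contradiction with $m>0$ directly; otherwise both points eventually lie in $\mathscr{D}^*$ resp. $\mathscr{D}$, and I apply the subsolution inequality for $u$ with test function $\varphi(s,x,w)=v^\theta(s'_\alpha,\dots)+\tfrac{\alpha}{2}|\cdots|^2+\beta\phi$ and the (strict) supersolution inequality for $v^\theta$ with the symmetric test function.

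Subtracting the two viscosity inequalities, the first-order terms $p[q(1+\eta)-\eta]\varphi_x+\varphi_s+\varphi_w$ match up to $O(\alpha|s_\alpha-s'_\alpha|)+O(\beta)$ because the only $x$-, $s$-, $w$-dependence of the test functions through the quadratic penalty produces $\alpha(s_\alpha-s'_\alpha)$, $\alpha(x_\alpha-x'_\alpha)$, $\alpha(w_\alpha-w'_\alpha)$, which vanish in the limit; the nonlocal terms $\lambda(w)\int_0^{x/q}(\cdot)\,dG$ are handled by uniform continuity of $u$ and of $G$ together with the fact that $x_\alpha,x'_\alpha$ and $w_\alpha,w'_\alpha$ converge to the same value, using the optimal $q$ from one inequality in the other (here one must treat the degeneracy $q\to 0$, i.e. $x/q\to\infty$, but on the domain $x\le\eta p(T-s)$ the integrand is bounded by $\sup|u|$, so this is harmless). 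What remains is the strict-supersolution gain: the difference of the two Hamiltonians is $\ge c\theta - o_\alpha(1) - C\beta$, and since the left-hand side is $\le 0$ (subsolution minus supersolution), sending $\alpha\to\infty$ and then $\beta\to 0$ yields $0\ge c\theta>0$, a contradiction. Hence $u\le v^\theta$ on $D$; letting $\theta\downarrow 0$ gives $u\le v$.

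The main obstacle I anticipate is the construction of the penalty function $\phi$ near the non-characteristic boundary faces together with the verification that the maximizer of $\Phi_{\alpha,\beta}$ cannot escape to those faces — one must check that $\mathscr{L}$ applied to the perturbation $-\beta\phi$ has a sign compatible with the subsolution inequality near each face, and that the geometry of $D$ (in particular the corner $w=s$ and the sloped face $x=\eta p(T-s)$) does not obstruct the argument. A secondary technical point is keeping all the $\beta$- and $\alpha$-error terms uniform in $\theta$, so that the order of limits ($\alpha\to\infty$, then $\beta\to0$, then $\theta\to0$) is legitimate; the uniform continuity statements for $V$ in $s$, $x$, $w$ proved in Section~3 are what make these errors controllable.
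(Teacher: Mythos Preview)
Your proposal is correct in its overall architecture—build a strict supersolution, double variables, derive a contradiction from the strictness gap—and this is precisely the strategy the paper follows. The two proofs differ, however, in both of the key technical ingredients.

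For the strict supersolution, the paper does not form a convex combination; it simply adds to $v$ the function
\[
\varphi^{\varsigma,\theta}(t,y,v)=\frac{\varsigma}{t+1}+\frac{\theta(T-t)}{t},
\]
which depends only on $t$. This gives $\mathscr{L}[v+\varphi^{\varsigma,\theta},\,\cdot\,]\le -\varsigma/(T+1)^2$ in the viscosity sense and, crucially, forces $v^{\varsigma,\theta}\to+\infty$ as $t\downarrow 0$, so the face $\{s=0\}$ is handled automatically and the comparison reduces to $\mathscr{D}^*\setminus\{t=0\}$. Your choice $\chi=1+K(T-s)$ also produces a strict supersolution, but it remains bounded at $s=0$, so you would have to absorb that face into your penalty $\phi$ as well.

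For the constrained-boundary issue—the fact that $v$ is a supersolution only on $\mathscr{D}$ while $u$ is a subsolution on the larger set $\mathscr{D}^*$—the paper does not introduce a penalty $\phi$ blowing up at the bad faces. Instead it uses Soner's interior-cone device: one takes a uniformly continuous map $\gamma:\overline{\mathscr{D}^*}\to\mathbb{R}^3$ with $\mathscr{N}(X+h\gamma(X),h\xi)\subset\mathscr{D}_0^*$ for all small $h$, and builds the \emph{asymmetric} doubling
\[
\Phi=u(s,x,w)-v^{\varsigma,\theta}(t,y,v)-\sum_{i=1}^{3}\bigl(\kappa\,[(s,x,w)-(t,y,v)]_i+\delta\gamma_i(Z^*)\bigr)^2-\delta\,|(s,x,w)-Z^*|^2,
\]
which forces the $(t,y,v)$-maximizer into the interior $\mathscr{D}_0^*$ for large $\kappa$ while leaving $(s,x,w)$ free to sit anywhere in $\mathscr{D}^*$. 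This sidesteps exactly the obstacle you flag: with Soner's construction there is no need to check the sign of $\mathscr{L}[-\beta\phi]$ near each face, nor to worry about the corners $\{w=s\}$ or $\{w=0\}$ of $D$. Your penalty route could likely be pushed through, but it would require precisely the verifications you list as anticipated difficulties; the paper's route avoids them entirely and is the standard one for state-constrained problems.
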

\begin{proof}
First, we define $\vf^{\varsigma,\th}(t,y,v):=\frac{\varsigma}{t+1}+\frac{\th(T-t)}{t}$, where $\th>0$ and $\varsigma>0$ are two constants. Then it is straightforward to check that $v+\vf^{\varsigma,\th}$ is still a viscosity supersolution. Actually, for any $\vf\in \mathbb{C}^{1,1,1}(\mathbb{R})$  such that $v+\vf^{\varsigma,\th}-\vf$ attains its minimum at $(t,y,v)\in\mathscr{D}$, we have
\[\mathscr{L}[v,\vf-\vf^{\varsigma,\th}](t,y,v)\le 0.\]
Thus, we can see that \[\mathscr{L}[v+\vf^{\varsigma,\th},\vf](s,x,w)\le \mathscr{L}[v,\vf-\vf^{\varsigma,\th}](s,x,w) +\vf_s^{\varsigma,\th}(s,x,w)\le -\frac{\varsigma}{(T+1)^2}. \]
We can also see that at the boundary of $D$, $[v+\vf^{\varsigma,\th}](T,y,w)> 1$ for all $y\ge 0$ and $[v+\vf^{\varsigma,\th}](t,y,v)> 1$ for all $y\ge \eta p(T-t)$.
Denote $v^{\varsigma,\th}:=v+\vf^{\varsigma,\th}$ for simplicity and  call $v^{\varsigma,\th}$ a strict supersolution of HJB equation (\ref{hjb}).  From now on, we  shall argue that $u\le v^{\varsigma,\th}$, which will lead to the desired comparison result as $\lim_{\th \downarrow 0,\varsigma\downarrow 0}v^{\varsigma,\th}=v.$

First, we note that $\lim_{t\rightarrow0} v^{\varsigma,\th}=+\infty.$ Consequently, it suffices to show that
\begin{align}\label{uni3}
u\le v^{\varsigma,\th}\quad \mbox{on}\;\mathscr{D}^*\backslash\{t=0\},
\end{align}
where $\mathscr{D}^*\backslash\{t=0\}:=\{(t,y,v):0<t<T,0\le x<\eta p(T-t), 0\le w\le t\}.$
Suppose (\ref{uni3}) is not true, then there exists a point $Z^*:=(s^*,x^*,w^*)\in \overline{\mathscr{D}^*\backslash \{t=0\}}$ such that
\begin{equation}\nonumber
M:=\sup_{\mathscr{D}^*\backslash\{t=0\}}\left(u(t,y,v)-v^{\varsigma,\th}(t,y,v)\right)=\left(u-v^{\varsigma,\th}\right)(s^*,x^*,w^*)>0.
\end{equation}
Next, denote $\mathscr{D}_0^*:=int \mathscr{D}^*$ and $\mathscr{D}_1^*:=\partial \mathscr{D}^*\backslash\left[\{t=0\}\cup\{x=\eta p(T-t)\}\cup\{t=T\}\right]$. Note that $u-v^{\varsigma,\th}\le 0$ on $t=0, x=\eta p(T-t)$ or $t=T$, thus $(s^*,x^*,w^*)$ can only happen on $\mathscr{D}_0^*\cap\mathscr{D}_1^*$. We consider the following two cases separately.

$Case$ $1$ We assume that $Z^*\in \mathscr{D}_0^*$. We define the function $\P$ on $\overline{\mathscr{D}^*}\times\overline{\mathscr{D}^*}$ by
\begin{equation}
\P(s,x,w,t,y,v)=u(s,x,w)-v^{\varsigma,\th}(t,y,v)-\frac{\k}{2}(s-t)^2-\frac{\k}{2}(x-y)^2-\frac{\k}{2}(w-v)^2.
\end{equation}
Let $M_k:=\max_{\overline{\mathscr{D}^*}\times\overline{\mathscr{D}^*}}\P(s,x,w,t,y,v)$ and $(s_\k, x_\k,w_\k,t_\k,y_\k,v_\k)$ be the maximizer of $\P$. We can see that $M_\k\ge M>0$ for all $\k>1$. Since $\overline{\mathscr{D}^*}\times \overline{\mathscr{D}^*}$ is compact, we can find a subsequence, may assume $(s_\k, x_\k,w_\k,t_\k,y_\k,v_\k)$ itself, such that $(s_\k, x_\k,w_\k,t_\k,y_\k,v_\k)\rightarrow (\hat{s},\hat{x},\hat{w},\hat{t},\hat{y},\hat{v})$. Since $M_\k\ge \P(s^*,x^*,w^*,s^*,x^*,w^*)$, we obtain that
\begin{align}\nonumber
&\frac{\k}{2}(s_\k-t_\k)^2+\frac{\k}{2}(x_\k-y_\k)^2+\frac{\k}{2}(w_\k-v_\k)^2
\\\label{uni4}
\le &u(s_\k,x_\k,w_\k)-v^{\varsigma,\th}(t_\k,y_\k,v_\k)-u(s^*,x^*,w^*)+v^{\varsigma,\th}(s^*,x^*,w^*).
\end{align}
Since  continuous function attains its maximum on any compact set, we see that $\frac{\k}{2}(s_\k-t_\k)^2+\frac{\k}{2}(x_\k-y_\k)^2+\frac{\k}{2}(w_k-v_\k)^2$ is bounded uniformly in $\k$. Thus, we see that $s_\k-t_\k, x_\k-y_\k$ and  $w_\k-v_\k$ convergence to $0$ as $\k\rightarrow 0$, which means, $\hat{s}=\hat{t}, \hat{x}=\hat{y}$ and $\hat{w}=\hat{v}$. Letting $\k\rightarrow\infty$ in (\ref{uni4}), we see that
\begin{align}\nonumber
&\lim_{\k\rightarrow\infty}\left[\frac{\k}{2}(s_\k-t_\k)^2+\frac{\k}{2}(x_\k-y_\k)^2+\frac{\k}{2}(w_\k-v_\k)^2\right]+u(s^*,x^*,w^*)-v^{\varsigma,\th}(s^*,x^*,w^*)\\
&\le u\left(\hat{s},\hat{x},\hat{w}\right)-v^{\varsigma,\th}\left(\hat{s},\hat{x},\hat{w}\right).
\end{align}
By the definition of $(s^*,x^*,w^*)$, we see $\hat{s}=s^*,\hat{x}=x^*,\hat{w}=w^*$. Since $(s^*,x^*,w^*)\in\mathscr{D}_0^*$, we see that for $\k$ large enough, $(s_\k,x_\k,w_\k)\in\mathscr{D}_0^*$ and $(t_\k,y_\k,v_\k)\in\mathscr{D}_0^*$. Define
\begin{align}
&\phi_1(s,x,w):=v^{\varsigma,\th}(t_\k,y_\k,v_\k)+\frac{\k}{2}(s-t_\k)^2+\frac{\k}{2}(x-y_\k)^2+\frac{\k}{2}(w-v_\k)^2,
\\
&\phi_2(t,y,v):=u(s_\k,x_\k,w_\k)-\frac{\k}{2}(s_\k-t)^2-\frac{\k}{2}(x_\k-y)^2-\frac{\k}{2}(w_\k-v)^2.
\end{align}
We observe that $u-\phi_1$ attains its maximum at $(s_\k,x_\k,w_\k)$ and $v^{\varsigma,\th}-\phi_2$ attains its minimum at $(t_\k,y_\k,v_\k)$. By definition, we see that
\begin{align}\nonumber
\max_{\pi\in[0,1]}\bigg\{&p(\pi(1+\eta)-\eta)\k(x_\k-y_\k)+\k(s_\k-t_\k)+\k(w_\k-v_\k)\\&+\lambda(w)\int_0^{\frac{x_\k}{\pi}}u(s_\k,x_\k-\pi y,0)dG(y)
-\lambda(w_\k)u(s_\k,x_\k,w_\k)\bigg\}\ge0,\\\nonumber
\max_{\pi\in[0,1]}\bigg\{&p(\pi(1+\eta)-\eta)\k(x_\k-y_\k)+\k(s_\k-t_\k)+\k(w_\k-v_\k)\\
&+\lambda(v_\k)\int_0^{\frac{y_\k}{\pi}}v^{\varsigma,\th}(t_\k,y_\k-\pi y, 0)dG(y)-\lambda(v_\k)v^{\varsigma,\th}(t_\k,y_\k,v_\k)\le -\frac{\varsigma}{(T+1)^2}.
\end{align}
Letting $\k\rightarrow\infty$, we see \[\lambda(w^*)M\ge \frac{\varsigma}{(T+1)^2}+\lambda(w^*)M,\]
which is a contradiction.

$Case$ $2$ We now consider the case $Z^*\in\mathscr{D}_1^*$. We shall first move this point away the boundary $\mathscr{D}_1^*$ into the interior $\mathscr{D}_0^*$ and then argue as Case $1$. The following construction is a suitable adaption of the construction of Benth et al. \cite{benth}. Since $\mathscr{D}^*$ is a simple polyhedron, it is not hard to see that there exist constants $h_0$, $\xi>0$ and a uniformly continuous map $\gamma:\overline{\mathscr{D}^*}\rightarrow\mathbb{R}^3$ satisfying
\begin{equation}\label{uni5}
\mathscr{N}(X+h\gamma(X),h\xi)\subset \mathscr{D}_0^*\; \;\mbox{for}\; \mbox{all}\; X\in \overline{\mathscr{D}^*}\; \mbox{and}\; h\in(0,h_0],
\end{equation}
where $\mathscr{N}(z,\rho)$ denotes the ball with radius $\rho$ and centre $z$. For any point $X\in\overline{\mathscr{D}^*}$, noticing $\gamma$ is a three-dimensional vector, we write $\gamma(X)=(\gamma_1(X),\gamma_2(X),\gamma_3(X))$. For any $\k>1$ and $0<\delta<1$, define the function $\Phi$ on $\overline{\mathscr{D}^*}\times\overline{\mathscr{D}^*}$ by
\begin{align}\nonumber
\Phi(s,x,w,t,y,v)=&u(s,x,w)-v^{\varsigma,\th}(t,y,v)-(\k(s-t)+\delta \gamma_1(Z^*))^2-(\k(x-y)+\delta\gamma_2(Z^*))^2\\
&-(\k(w-v)+\delta\gamma_3(Z^*))^2-\delta[(s-s^*)^2+(x-x^*)^2+(w-w^*)^2].
\end{align}
Let \[M_\k:=\max_{\overline{\mathscr{D}^*}\times\overline{\mathscr{D}^*}}\Phi(s,x,w,t,y,v).\]
Then we have $M_\k\ge u(s^*,x^*,w^*)-v^{\varsigma,\th}(s^*,x^*,w^*)-\delta^2\gamma(Z^*)^2>0$ for any $\k>1$ and $\delta<\delta_0$, where $\delta_0$ is some fixed small number. Let $(s_\k,x_\k,w_\k,t_\k,y_\k,v_\k)\in\overline{\mathscr{D}^*}\times\overline{\mathscr{D}^*}$ be a maximizer of $\Phi$.
From \[\Phi(s_\k,x_\k,w_\k,t_\k,y_\k,v_\k)\ge \Phi\left(s^*,x^*,w^*,s^*+\frac{\delta}{\k}\gamma_1(Z^*),x^*+\frac{\delta}{\k}\gamma_2(Z^*), w^*+\frac{\delta}{\k}\gamma_3(Z^*)\right),\]
we see that
\begin{align}\nonumber
&|\k(s_\k-t_\k)+\delta\gamma_1(Z^*)|^2+|\k(x_\k-y_\k)+\delta\gamma_2(Z^*)|^2+|\k(w_\k-v_\k)+\delta\gamma_3(Z^*)|^2\\\nonumber
&+\delta((s_\k-s^*)^2+(x_\k-x^*)^2+(w_\k-w^*)^2)\le u(s_\k,x_\k,w_\k)-v^{\varsigma,\th}(t_\k,y_\k,v_\k)\\\label{uni6}
&-v^{\varsigma,\th}(Z^*)-\left(u-v^{\varsigma,\th}\right)(Z^*)+v^{\varsigma,\th}\left(s^*+\frac{\delta}{\k}\gamma_1(Z^*),x^*+\frac{\delta}{\k}\gamma_2(Z^*), w^*+\frac{\delta}{\k}\gamma_3(Z^*)\right).
\end{align}
Since $u$ and $v^{\varsigma,\th}$ are bounded on $\overline{\mathscr{D}^*}$, it follows that $|\k(s_\k-t_\k)|$, $|\k(x_\k-y_\k)|,|\k(w_\k-v_\k)|$ are bounded uniformly in $\k$. Hence, we have $s_\k-t_\k\rightarrow0, x_\k-y_\k\rightarrow 0, w_\k-v_\k\rightarrow 0$ as $\k\rightarrow 0$ and $\lim_{\k\rightarrow\infty}(u(s_\k,x_\k,w_\k)-v^{\varsigma,\th}(t_\k,y_\k,v_\k))\le M.$ Sending $\k\rightarrow\infty$ in (\ref{uni6}) and using the continuity of $u$ and $v^{\varsigma,\th}$, we then conclude that $\k(s_\k-t_\k)+\delta\gamma_1(Z^*)\rightarrow0$, $\k(x_\k-y_\k)+\delta\gamma_2(Z^*)\rightarrow0$, $\k(w_\k-v_\k)+\delta\gamma_3(Z^*)\rightarrow0$, $(s_\k,x_\k,w_\k)\rightarrow Z^*$, $(t_\k,y_\k,v_\k)\rightarrow Z^*$ and $M_\k\rightarrow M$. Therefore, using the uniformly continuity of $\gamma$, $t_\k=s_\k+\frac{\delta}{\k}\gamma_1(s_\k,x_\k,w_\k)+o(\frac{1}{\k})$. Similarly, we see that $y_\k=x_\k+\frac{\delta}{\k}\gamma_2(s_\k,x_\k,w_\k)+o(\frac{1}{\k})$ and $v_\k=w_\k+\frac{\delta}{\k}\gamma_3(s_\k,x_\k,w_\k)+o(\frac{1}{\k})$. We use (\ref{uni5}) to get $(t_\k,y_\k,v_\k)\in\mathscr{D}_0^*$ for $\k$ large enough. Now define
\begin{align}\nonumber
\varphi_1(s,x,w):=&v^{\varsigma,\th}(t_\k,y_\k,v_\k)+(\k(s-t_\k)+\delta\gamma_1(Z^*))^2+(\k(x-y_\k)+\delta\gamma_2(Z^*))^2\\\nonumber
&+(\k(w-v_\k)+\delta\gamma_3(Z^*))^2+\delta(s-s^*)^2+\delta(x-x^*)^2+\delta(w-w^*)^2.\\\nonumber
\varphi_2(t,y,v):=&u(s_\k,x_\k,w_\k)-(\k(s_\k-t)+\delta\gamma_1(Z^*))^2-(\k(x_\k-y)+\delta\gamma_2(Z^*))^2\\\nonumber
&-(\k(w_\k-v)+\delta\gamma_3(Z^*))^2-\delta(s_\k-s^*)^2-\delta(x_\k-x^*)^2-\delta(w_\k-w^*)^2.
\end{align}
Apparently, $u-\varphi_1$ attains its maximum at $(s_\k,x_\k,w_\k)$ and $v^{\varsigma,\th}-\vf_2$ attains its minimum at $(t_\k,y_\k,v_\k)$. By definition, we see that
\begin{align}
\nonumber
\max_{\pi\in[0,1]}\bigg\{&p(\pi(1+\eta)-\eta)\left(2\k\left(\k(x_\k-y_\k)+\delta\gamma_2(Z^*)\right)+2\delta(x_\k-x^*)\right)\\\nonumber
&+2\k(\k(s_\k-t_\k)+\delta\gamma_1(Z^*))+2\k(\k(w_\k-v_\k)+\delta\gamma_3(Z^*))+2\delta(s_\k-s^*)+2\delta(w_\k-w^*)\\\label{uni7}
&+\lambda(w_\k)\int_0^{\frac{x_\k}{\pi}}u(s_\k,x_\k-\pi y,0)dG(y)-\lambda(w_\k)u(s_\k,x_\k,w_\k)\bigg\}\ge 0.\\\nonumber
\max_{\pi\in[0,1]}\bigg\{&2\k p(\pi(1+\eta)-\eta)(\k(x_\k-y_\k)+\delta\gamma_2(Z^*))+2\k(\k(s_\k-t_\k)+\delta\gamma_1(Z^*))-\lambda(v_\k)v^{\varsigma,\th}(t_\k,y_\k,v_\k)\\\label{uni8}
&+2\k(\k(w_\k-v_\k)+\delta\gamma_3(Z^*))+\lambda(v_\k)\int_0^{\frac{y_\k}{\pi}}v^{\varsigma,\th}(t_\k,y_\k-\pi y, 0)dG(y)\bigg\}\le -\frac{\varsigma}{(T+1)^2}.
\end{align}
Combing (\ref{uni7}) and (\ref{uni8}), we send (in that order) $\k\rightarrow\infty$, $\delta\rightarrow 0$ to obtain the desired contradiction
 \[\lambda(w^*)M\ge \frac{\varsigma}{(T+1)^2}+\lambda(w^*)M.\]\
 Now we complete the proof.
 \end{proof}
\section{Concluding Remark}
This paper considers the problem of minimizing ruin probability under the renewal process for the first time.  Since all the theorems and propositions can be proved similarly when we add the investment control variable, we did not consider the effect of investment in this optimization problem.
  The main difficulty of this paper is that there is no discount factor, which makes it tricky to prove the uniqueness of the solution. By constructing strict viscosity solutions and comparing the size of subsolution and the strict supersolution, we show that the value function is the unique viscosity solution of the HJB equation. The uniqueness theory provides theoretical support for possible numerical solutions in future research.

\end{CJK*}
\end{sloppypar}
\end{document}